\newtheorem{theorem}{Theorem}
\newtheorem{lemma}[theorem]{Lemma}
\theoremstyle{remark}
\newtheorem{remark}[theorem]{Remark}
\begin{document}

\title{Krein's trace theorem revisited}

\author[Potapov]{D.~Potapov}
\address{School of Mathematics and Statistics, University of New South Wales, Kensington, NSW, Australia.}
\email{d.potapov@unsw.edu.au}
\author[Sukochev]{F.~Sukochev}
\email{f.sukochev@math.unsw.edu.au}
\author[Zanin]{D.~Zanin}
\email{d.zanin@math.unsw.edu.au}

\subjclass[2000]{47C15}

\begin{abstract} We supply the first proof of Krein's Trace Theorem which does not use complex analysis.
Our proof holds for~$\sigma$-finite von Neumann algebras
$\mathcal{M}$ of type II and unbounded perturbations from the
predual of~$\mathcal{M}$.
\end{abstract}

\maketitle

\section{Introduction}
\label{sec:intro}

The first attempt to deliver a proof of the existence of the Krein
Spectral Shift Function (KrSSF) without using complex analytical
facts was made by M.Sh.~Birman and M.Z.~Solomyak in 1972
(see~\cite{BS1972-onSSF} and also~\cite{BirPus1998-MR1607900}).
Their method was based on the theory of double operator integrals
developed by those authors
in~\cite{BS-DOI-I,BS-DOI-II,BS-DOI-III}. That attempt led to
introducing of an important notion of the spectral averaging
measure (see also~\cite{Simon}), but was not successful since the
authors of~\cite{BS1972-onSSF}  failed to prove directly the
absolute continuity of that measure with respect to Lebesgue
measure.  The second attempt to deliver such a proof is due to D.
Voiculescu~\cite{Voiculescu}, whose method is based on the usage
of classical Weyl-von Neumann theorem.  However, his attempt also
fails to recover the full generality of Krein's original result.
In our present paper, we combine methods drawn from the double
operator integration theory with Voiculescu's ideas and deliver a
rather short and straightforward new proof of Krein's result in
full generality. The advantage of our present approach is seen
from the following extension.

We deliver the complete proof of the existence of the spectral
shift function~$\xi_{A,B}$ in the setting when~$A$ and~$B$ are
self-adjoint operators affiliated with a~$\sigma$-finite
semifinite von Neumann algebra~$\mathcal{M}$, whose difference
$A-B$ belongs to the predual of~$\mathcal{M}$. If~$\mathcal{M}$ is
a type I factor, then this is precisely the Krein's result. For a
general semifinite von Neumann algebra, the earlier attempt based
on emulating the Krein's complex-analytical proof only yielded the
 result for the special case when~$V$ was necessarily a bounded trace class
perturbation~\cite{ADS}.



\section{Preliminaries}\label{sec:prelim}

Let~$H$ be an infinite dimensional Hilbert space and
let~$\mathcal{L}(H)$ be the algebra of all bounded operators
in~$H$. In what follows, $\mathcal{M}$ is a von Neumann algebra
on~$H$, that is a $*$-subalgebra of~$\mathcal{L}(H)$ closed in the
weak operator topology. The identity in $\mathcal{M}$ is denoted
by $1$. We are only interested in semifinite von Neumann algebras,
that is, those which admit a faithful normal semifinite
trace~$\tau$. We fix a couple~$(\mathcal{M},\tau)$. A von Neumann
algebra is said to be~$\sigma$-finite if it admits at most
countably many orthogonal projections.

An (unbounded) operator is said to be affiliated with
$\mathcal{M}$ if it commutes with every operator in the commutant
$\mathcal{M}'$ of~$\mathcal{M}$. Closed densely defined operator
$A$ affiliated with~$\mathcal{M}$ is said to be~$\tau$-measurable
if, for every~$\varepsilon>0$, there exists a projection
$p\in\mathcal{M}$ such that~$\tau(p)<\varepsilon$ and such that
$(1-p)H\subset {\rm dom}(A)$. The collection of all
$\tau$-measurable operators is denoted by $S(\tau)$. The real
vector space $S_{h}\left( \tau \right) =\left\{ A\in S\left( \tau
\right) :A=A^{\ast }\right\} $ is a partially ordered vector space
with the ordering defined by setting $A\geq 0$ if and only if
$\left\langle A\xi ,\xi \right\rangle \geq 0$ for all $\xi \in
\mathcal{D}\left( A\right) $. The positive cone in $S_{h}\left(
\tau \right) $ will be denoted by $S\left( \tau \right) ^{+}$. The
positive part $A_{+}$ and negative part $A_{-}$ of an operator
$A\in S_{h}\left( \tau \right) $ are defined by
\begin{equation*}
A_{+}:=\int_{\mathbb{R}}\lambda ^{+}dE_A(-\infty,\lambda] \text{ \
\ \ and \ \ \ }A_{-}:=\int_{\mathbb{R}}\lambda
^{-}dE_A(-\infty,\lambda]
\end{equation*}
respectively, where $\lambda ^{+}=\max \left( \lambda ,0\right) $
and $ \lambda ^{-}=\max \left( -\lambda ,0\right) $ and
$E_A(-\infty,\lambda]$ is the spectral projection of the
self-adjoint operator $A$ corresponding to the
interval~$(-\infty,\lambda]$. It follows immediately from the
spectral theorem that $ A=A_{+}-A_{-}$.

The notions of the distribution function~$n_A$, $A\in S_{h}\left(
\tau \right) $ and that of the singular value function~$\mu(A)$,
$A\in S\left( \tau \right) $ are defined as follows
$$n_A(t):=\tau(E_A(t,\infty)),\ t\in\mathbb{R}  \ \ \text{and}\ \
\mu(t;A):=\inf\{s: n_{|A|}(s)\leq t\},\quad t\ge0.$$ It follows
directly that the singular value function $\mu(A)$ is a
decreasing, right-continuous function on the positive half-line
$[0,\infty)$. The trace $\tau $ extends to $S\left( \tau \right)
^{+}$ as a non-negative extended real-valued functional which is
positively homogeneous, additive, unitarily invariant and normal.
This extension is given by
\[
\tau \left( A\right) =\int_{0}^{\infty }\mu \left( t;A\right) dt,\
\ \ A\in S\left( \tau \right) ^{+},
\]
and satisfies $\tau \left( A^{\ast }A\right) =\tau \left( AA^{\ast
}\right) $ for all $A\in S\left( \tau \right) $. If ${\mathcal M}
={\mathcal L}({\mathcal H})$ and $\tau $ is the standard trace,
then $S(\tau) ={\mathcal M}$. In this case, an operator $A\in
S(\tau)$ is compact if and only if $\lim_{t\to \infty}\mu(t;A)=0$
and
$$
\mu _n(A)=\mu \left(t;A\right), \quad t\in [n,n+1),\quad
n=0,1,2,\dots ,
$$
and the sequence $\{\mu _n(A)\}_{_{n=0}}^{\infty }$ is just the
sequence of eigenvalues of $\vert A\vert $ in non-increasing order
 and counted according to multiplicity.

The noncommutative
space~$\mathcal{L}_p=\mathcal{L}_p(\mathcal{M},\tau)$, $1\leq
p\leq\infty$ is defined as follows
$$\mathcal{L}_p=\{A\in S(\tau):\ \mu(|A|)\in L_p:=L_p(0,\infty)\},$$
where $L_p(0,\infty)$ is the usual Lebesgue space. The space
$\mathcal{L}_p $ is a linear subspace of $S\left( \tau \right) $
and the functional $A\longmapsto \left\Vert A\right\Vert
_{p}:=\left (\tau(|A|^p)\right)^{1/p}$, $A\in \mathcal{L}_p $,
$1\leq p<\infty$ is a norm. For convenience, we set
$(\mathcal{L}_\infty,\|\cdot\|_\infty)=\mathcal M$ equipped with
the uniform operator norm. We have, in particular, $\left\Vert
A\right\Vert _{p}=\left\Vert \mu \left( A\right) \right\Vert _{p}$
for all $A\in \mathcal{L}_p $, $1\leq p\leq \infty$ (we denote the
norm on $\mathcal{L}_p$ and the standard norms on Lebesgue spaces
$L_p(0,\infty)$ and $L_p(\mathbb{R})$ by the same symbol
$\|\cdot\|_p$ and this should not cause any confusion). We recall
the following useful formula
$$\|n_A\|_1=\int_0^{\infty}n_A(s)ds = sn_A(s)|_0^{\infty}-\int_0^{\infty}sdn_A(s)=\tau(A)=\|A\|_1,$$
which holds for every ~$A\in S\left( \tau \right) ^{+}$. Equipped
with the norm $\|\cdot\|_1$, the space $\mathcal{L}_1$ is a Banach
space. It is well known (see e.g.~\cite{PX}) that $\mathcal{L}_1$
is isometric to a predual~$\mathcal{M}_{*}$ of the von Neumann
algebra~$\mathcal{M}$. In what follows, we will need the following
result whose proof follows verbatim from that of ~\cite[Lemmas 15
and 16]{SZ}, where this result stated for $s,t>0$.

\begin{lemma}\label{lemma} Suppose that $\mathcal{M}$ is a finite von Neumann algebra and that
~$\tau(1)<\infty$. If operators $A, B \in S_{h}\left( \tau \right)
$ satisfy $A\geq B$, then $n_A\geq n_B$ and the inequality
\begin{equation}\label{nsum}
n_{A+B}(s+t)\leq n_A(s)+n_B(t)
\end{equation}
holds for all $s,t\in \mathbb{R}$.
\end{lemma}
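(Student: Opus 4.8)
The plan is to deduce both assertions from one structural feature of finite von Neumann algebras. Since $\tau(1)<\infty$, Kaplansky's parallelogram law $e\vee f-f\sim e-e\wedge f$ together with $e\vee f\le 1$ yields $\tau(e\wedge f)=\tau(e)+\tau(f)-\tau(e\vee f)\ge \tau(e)+\tau(f)-\tau(1)$ for all projections $e,f\in\mathcal{M}$; in particular $\tau(e\vee f)\le\tau(e)+\tau(f)$ and, crucially, $e\wedge f\ne 0$ whenever $\tau(e)+\tau(f)>\tau(1)$. The only real subtlety is that $A,B$ are unbounded, so that a nonzero vector in the range of an unbounded spectral projection such as $E_C(a,\infty)$ need not lie in $\mathrm{dom}(C)$. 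I would therefore apply the pigeonhole statement above only to \emph{bounded} spectral truncations $E_C[a,b]$, whose ranges do lie in $\mathrm{dom}(C)$ and on which $a\|\xi\|^2\le\langle C\xi,\xi\rangle\le b\|\xi\|^2$, and recover the full spectral projections by letting the truncation parameter tend to infinity, all traces involved being finite and $\tau$ normal. Recall also that for $\tau$-measurable $A,B$ the operator $A+B$ is the closure of its restriction to the dense subspace $\mathrm{dom}(A)\cap\mathrm{dom}(B)$, on which $(A+B)\xi=A\xi+B\xi$.

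\emph{Monotonicity $n_A\ge n_B$.} Fix $t\in\mathbb{R}$ and suppose, for contradiction, that $n_B(t)>n_A(t)$, i.e. $\tau(E_B(t,\infty))+\tau(E_A(-\infty,t])>\tau(1)$. By normality there is $m>0$ with $\tau(E_B(t,m])+\tau(E_A[-m,t])>\tau(1)$, hence $e:=E_A[-m,t]\wedge E_B(t,m]\ne 0$. Any $0\ne\xi\in eH$ lies in $\mathrm{dom}(A)\cap\mathrm{dom}(B)$, hence in $\mathrm{dom}(A-B)$ with $(A-B)\xi=A\xi-B\xi$, and satisfies $\langle A\xi,\xi\rangle\le t\|\xi\|^2<\langle B\xi,\xi\rangle$; thus $\langle (A-B)\xi,\xi\rangle<0$, contradicting $A\ge B$.

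\emph{The inequality $n_{A+B}(s+t)\le n_A(s)+n_B(t)$.} No order hypothesis enters here. We may assume $n_A(s)+n_B(t)<\tau(1)$, the inequality being trivial otherwise since $n_{A+B}\le\tau(1)$. For $m>0$ put $e_A^m:=E_A[-m,s]$ and $e_B^m:=E_B[-m,t]$; by normality $\tau(1-e_A^m)\to n_A(s)$ and $\tau(1-e_B^m)\to n_B(t)$ as $m\to\infty$. If the inequality failed, then $n_{A+B}(s+t)>n_A(s)+n_B(t)$, and since $\tau(E_{A+B}(s+t,m])\to n_{A+B}(s+t)$ we could fix $m$ so large that both $\tau(1-e_A^m)+\tau(1-e_B^m)<n_{A+B}(s+t)$ and $\tau(E_{A+B}(s+t,m])>\tau(1-e_A^m)+\tau(1-e_B^m)$. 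Setting $e:=e_A^m\wedge e_B^m$ we get $\tau(1-e)=\tau((1-e_A^m)\vee(1-e_B^m))\le\tau(1-e_A^m)+\tau(1-e_B^m)<\tau(E_{A+B}(s+t,m])$, whence $\tau(E_{A+B}(s+t,m])+\tau(e)>\tau(1)$ and $r:=E_{A+B}(s+t,m]\wedge e\ne 0$. A nonzero $\xi\in rH$ lies in $\mathrm{dom}(A)\cap\mathrm{dom}(B)$ and in the range of $E_{A+B}(s+t,m]$, so $(s+t)\|\xi\|^2<\langle (A+B)\xi,\xi\rangle=\langle A\xi,\xi\rangle+\langle B\xi,\xi\rangle\le s\|\xi\|^2+t\|\xi\|^2$, a contradiction.

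The main obstacle is precisely the domain bookkeeping just described: the clean pigeonhole statement $\tau(e)+\tau(f)>\tau(1)\Rightarrow e\wedge f\ne 0$ must be applied to bounded spectral intervals so that the common vector it produces lies in $\mathrm{dom}(A)\cap\mathrm{dom}(B)$ and the quadratic forms of $A$, $B$ and $A+B$ split additively on it; finiteness of $\tau(1)$ and normality of $\tau$ are exactly what legitimize passing from the truncations back to the full spectral projections. No complex analysis and no double operator integration enters the argument.
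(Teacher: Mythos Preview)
Your argument is correct. The paper itself does not supply a proof of this lemma; it simply states that the proof ``follows verbatim from that of~\cite[Lemmas~15 and~16]{SZ}, where this result [is] stated for~$s,t>0$.'' Your proposal is therefore a self-contained replacement for that citation.

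The route you take --- Kaplansky's parallelogram identity $\tau(e\wedge f)=\tau(e)+\tau(f)-\tau(e\vee f)$ to force $e\wedge f\ne 0$ whenever $\tau(e)+\tau(f)>\tau(1)$, and then a quadratic-form contradiction on a nonzero vector in the intersection --- is exactly the standard mechanism behind such distribution-function inequalities in finite von Neumann algebras, and is almost certainly what the cited lemmas in~\cite{SZ} do. What you add beyond the $s,t>0$ case covered there is the explicit domain bookkeeping: truncating to bounded spectral intervals $E_A[-m,s]$, $E_B[-m,t]$, $E_{A+B}(s+t,m]$ so that the common vector lies in $\mathrm{dom}(A)\cap\mathrm{dom}(B)$ and the forms split additively, then recovering the unbounded projections via normality of~$\tau$ and $\tau(1)<\infty$. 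This is precisely the care the paper alludes to when it says the argument carries over to all $s,t\in\mathbb{R}$, and you have executed it correctly. One small remark: in the monotonicity step you rely on the \emph{strict} inequality $\langle B\xi,\xi\rangle>t\|\xi\|^2$ for $\xi$ in the range of $E_B(t,m]$; this is indeed correct because the spectral measure of $\xi$ is supported in the open-at-$t$ interval $(t,m]$, but it is worth saying so explicitly.
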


The following Weyl-von Neumann type theorem is at the core of our
approach in this paper. It is (implicitly) proved in~\cite{PSW}.
For the classical Weyl-von Neumann theorem we refer the reader to~\cite{Davidson}.

\begin{theorem}\label{wvn} Let~$\mathcal{M}$ be a~$\sigma$-finite von Neumann algebra equipped
with a faithful normal semifinite trace~$\tau$. For every
$A\in\mathcal{M}$, there exists a sequence of~$\tau$-finite
projections~$p_n\uparrow 1$, $n\geq0$, such that
$\|[A,p_n]\|_2\to0$.
\end{theorem}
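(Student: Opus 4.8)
The plan is to reduce the theorem to a single \emph{local} statement and then to build the sequence by induction. The local statement I would aim for is: for every $\tau$-finite projection $q$ and every $\varepsilon>0$ there is a $\tau$-finite projection $p\geq q$ with $\|[A,p]\|_2<\varepsilon$. Granting this, fix (using semifiniteness and $\sigma$-finiteness) a sequence of $\tau$-finite projections $q_m\uparrow 1$, and construct $p_n$ recursively by applying the local statement to the $\tau$-finite projection $q_n\vee p_{n-1}$ and to $\varepsilon=1/n$. Since a finite join of $\tau$-finite projections is again $\tau$-finite, each $p_n$ is $\tau$-finite; by construction $p_n\geq p_{n-1}$ and $p_n\geq q_n$, so $p_n\uparrow 1$, while $\|[A,p_n]\|_2<1/n\to 0$. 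Note that this scheme never uses self-adjointness of $A$: it applies verbatim to an arbitrary $A\in\mathcal{M}$, which is exactly what the statement demands.

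The local statement is where Voiculescu's idea (cf.~\cite{Voiculescu}, and \cite{Davidson} for the classical Weyl--von Neumann theorem) enters, in the guise of a quasicentral approximate unit. I would start from a net of $\tau$-finite projections $e_\lambda\uparrow 1$ with $e_\lambda\geq q$ (replace $e_\lambda$ by $e_\lambda\vee q$ if necessary). A single $e_\lambda$ need not almost commute with $A$ in $\|\cdot\|_2$; indeed $\|[A,e_\lambda]\|_2$ typically stays bounded away from $0$ and may even grow. The essential point is that the smallness must be extracted by \emph{averaging}: passing to convex (Ces\`aro-type) combinations $u=\sum_i t_i\,e_{\lambda_i}$, the individual commutators $[A,e_{\lambda_i}]$ sit, to leading order, in mutually almost-orthogonal off-diagonal positions, so that cancellation forces $\|[A,u]\|_2$ to be small even though $\sum_i t_i\|[A,e_{\lambda_i}]\|_2$ is not small. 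Such a $u$ is a $\tau$-finite positive contraction with $u\geq q$, and the output of this step is: for every $\varepsilon>0$ a $\tau$-finite positive contraction $u\geq q$ with $\|[A,u]\|_2<\varepsilon$. I expect this averaging estimate---the genuine quasicentrality---to be the conceptual heart of the argument.

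It remains to replace the positive contraction $u$ by a projection. I would set $p:=\chi_{(t,1]}(u)$ for a suitable level $t\in(0,1)$. Then $p$ is $\tau$-finite, since $p\leq t^{-1}u$; and $p\geq q$, because $q\leq u\leq 1$ forces $u\xi=\xi$ for $\xi\in\mathrm{ran}\,q$, so that $\mathrm{ran}\,q\subseteq\mathrm{ran}\,\chi_{\{1\}}(u)\subseteq\mathrm{ran}\,p$. The difficulty is to transfer the commutator bound from $u$ to $p=\chi_{(t,1]}(u)$: since the characteristic function is not operator Lipschitz, one cannot naively compare $\|[A,\chi_{(t,1]}(u)]\|_2$ with $\|[A,u]\|_2$. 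This is precisely where I would invoke the double operator integration machinery (\cite{BS-DOI-I,BS-DOI-II,BS-DOI-III}): for a Lipschitz function $f$ one has an estimate of the form $\|[A,f(u)]\|_2\leq C\,\|f'\|_\infty\,\|[A,u]\|_2$, and I would apply it to a Lipschitz function agreeing with $\chi_{(t,1]}$ on the spectrum of $u$, choosing the cut-off level $t$ (for instance by a pigeonhole/averaging argument over $t$, or in a spectral gap of $u$) so that the effective constant $\|f'\|_\infty$ is controlled. Thus the main obstacle, after the quasicentral averaging, is this non-Lipschitz spectral cut-off: bounding $\|[A,\chi_{(t,1]}(u)]\|_2$ by $\|[A,u]\|_2$ through double operator integrals together with a judicious choice of $t$.
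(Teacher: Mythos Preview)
Your outline differs substantially from the paper's proof, which does not argue directly at all: it simply invokes Lemmas~6.3 and~6.4 of~\cite{PSW}, using the $\sigma$-finiteness hypothesis only to reduce the net produced there to a sequence. So there is little in the paper itself to compare against; the real work is delegated to~\cite{PSW}, whose methods are structural rather than based on quasicentral approximate units.

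Your scheme, however, has a genuine gap at the spectral cut-off step, and it is not merely a missing detail. If the $e_{\lambda_i}$ you average are nested --- the natural choice, and essentially forced once you require each $e_{\lambda_i}\geq q$ --- then $u=\sum_i t_i\,e_{\lambda_i}$ has finite spectrum and its spectral projections $\chi_{(t,1]}(u)$ are \emph{precisely} the original $e_{\lambda_i}$. Thus the cut-off hands you back one of the projections you started with, and the smallness of $\|[A,u]\|_2$ yields nothing new about $\|[A,e_{\lambda_i}]\|_2$: writing $f_k=e_{\lambda_k}-e_{\lambda_{k-1}}$ and $s_k=\sum_{i\geq k}t_i$, one has
\[
\|[A,u]\|_2^2=\sum_{j\neq k}(s_j-s_k)^2\,\|f_jAf_k\|_2^2,
\]
and it is exactly the damping weights $(s_j-s_k)^2$ that make the average small; they disappear upon cutting. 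The averaging and the cut-off undo each other, so the argument is circular. Neither a ``spectral gap of $u$'' (the gaps are certainly there, but cutting in any of them returns an $e_{\lambda_i}$) nor a pigeonhole over $t$ rescues this.

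To make a quasicentral-approximate-unit strategy succeed one would have to either produce directly $\tau$-finite projections with small $\|[A,\cdot]\|_2$ --- which is the theorem itself --- or build $u$ from \emph{non-nested} $\tau$-finite projections in such a way that both the averaging estimate and a controllable cut-off survive. Neither route is addressed in your proposal, and the second is far from routine.
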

\begin{proof} It follows from Lemma 6.4 in~\cite{PSW} that there exists a net~$p_i$, $i\in\mathbb{I}$,
 of orthogonal projections such that the algebra~$p_i\mathcal{M}p_i$ admits a~$\tau$-finite generating projection.
 Since~$\mathcal{M}$ is~$\sigma$-finite, it follows that the set~$\mathbb{I}$ is, at most, countable.
 The assertion follows now from Lemma 6.3 in~\cite{PSW}.
\end{proof}

The following two lemmas constitute a small complement to
Theorem~\ref{wvn}. These lemmas are at the core of Voiculescu's
approach~\cite{Voiculescu}. If $A\in S_h(\tau)$, then the
projection onto the closure of the range of $\vert A\vert$ is
called the support of $A$ and is denoted by ${\rm supp}(A)$.

\begin{lemma}\label{pr lemma} Let~$\mathcal{M}$ be a von Neumann algebra equipped with a faithful normal
semifinite trace~$\tau$. If  $p_n\uparrow 1$ and if
$C\in\mathcal{M}$, $C^*=C$ is such that~$\tau({\rm
supp}(C))<\infty$, then~$\|[C,p_n]\|_2\to 0$.
\end{lemma}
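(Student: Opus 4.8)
The plan is to reduce the commutator to a single off-diagonal corner and then to invoke normality of $\tau$. First I would extract the only consequence of the support hypothesis that is needed: since $|C|\le\|C\|_\infty\,{\rm supp}(C)$, we have $\tau(C^2)=\tau(|C|^2)\le\|C\|_\infty^2\,\tau({\rm supp}(C))<\infty$, so $C\in\mathcal L_2$ (in fact $C\in\mathcal L_p$ for every $p$).

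Next I would decompose $C$ relative to $p_n$, writing $C=p_nCp_n+p_nC(1-p_n)+(1-p_n)Cp_n+(1-p_n)C(1-p_n)$. Then $Cp_n$ and $p_nC$ have the same diagonal corner $p_nCp_n$, so $[C,p_n]=Cp_n-p_nC=(1-p_n)Cp_n-p_nC(1-p_n)$. Since $\|p_nC(1-p_n)\|_2=\|(1-p_n)Cp_n\|_2$ (because $C=C^*$), the triangle inequality in $\mathcal L_2$ gives $\|[C,p_n]\|_2\le 2\,\|(1-p_n)Cp_n\|_2$. Then $\|(1-p_n)Cp_n\|_2\le\|(1-p_n)C\|_2$ since $\|p_n\|_\infty\le1$, and $\|(1-p_n)C\|_2^2=\tau\big(C(1-p_n)C\big)=\tau(C^2)-\tau(Cp_nC)$.

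It remains to prove $\tau(Cp_nC)\to\tau(C^2)$. The net $Cp_nC$ is increasing (the map $X\mapsto CXC$ is positive and $p_n\uparrow1$), bounded by $\|C\|_\infty^2$, and satisfies $Cp_nC\xi=C\big(p_n(C\xi)\big)\to C^2\xi$ for every $\xi\in H$; hence $C^2=\sup_n Cp_nC$. Because $\tau$ is normal and $\tau(C^2)<\infty$, this yields $\tau(Cp_nC)\uparrow\tau(C^2)$, so $\|(1-p_n)C\|_2^2=\tau(C^2)-\tau(Cp_nC)\to0$ and therefore $\|[C,p_n]\|_2\to0$.

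I do not anticipate a genuine obstacle. The one step requiring care is the interchange of $\tau$ with the supremum: this is precisely normality of the extended trace, and it is legitimate here because $\tau(C^2)<\infty$, so that the difference $\tau(C^2)-\tau(Cp_nC)$ of finite quantities is meaningful and tends to $0$.
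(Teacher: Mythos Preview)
Your argument is correct. Both proofs hinge on the observation that $\tau(C^2)<\infty$ and on passing a limit through $\tau$, but the routes differ in detail. The paper compresses via the support projection, setting $A_n={\rm supp}(C)\,p_n\,{\rm supp}(C)$, expands $\|[C,p_n]\|_2^2=2\tau(A_nC^2)-2\tau((A_nC)^2)$, and then works inside the \emph{finite} algebra ${\rm supp}(C)\,\mathcal{M}\,{\rm supp}(C)$, invoking strong convergence of products (via \cite{BratRob}) and strong continuity of a finite trace. You instead bound the commutator by the single corner $\|(1-p_n)C\|_2$, whose square is the difference $\tau(C^2)-\tau(Cp_nC)$, and appeal directly to normality of $\tau$ on the increasing net $Cp_nC\uparrow C^2$. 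Your path is a bit more economical: it avoids the support compression, the product-convergence lemma, and the passage to a finite subalgebra; the paper's exact expansion, on the other hand, yields an identity rather than an inequality, which is occasionally handy elsewhere. A minor sharpening: the two off-diagonal corners $(1-p_n)Cp_n$ and $p_nC(1-p_n)$ are in fact $\mathcal L_2$-orthogonal, so one even has $\|[C,p_n]\|_2^2=2\|(1-p_n)Cp_n\|_2^2$, though your triangle-inequality bound already suffices.
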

\begin{proof} Define~$A_n={\rm supp}(C)p_n{\rm supp}(C)$. We have~$A_n\uparrow {\rm supp}(C)$ and
$$\|[C,p_n]\|_2^2=-\tau([C,p_n]^2)=2\tau(A_nC^2)-2\tau((A_nC)^2).$$
The sequence~$A_n$ strongly converges to~${\rm supp}(C)$.
Therefore, $A_nC^2\to C^2$ and~$A_nC\to C$ strongly.  By
~\cite[Proposition 2.4.1]{BratRob}, we have~$(A_nC)^2\to C^2$
strongly. Since the trace $\tau$ is strongly continuous (see e.g.
\cite[Lemma~1.2, Theorem~1.10]{SZ-Lec-on-vNA}) and the algebra
~${\rm supp}(C)\mathcal{M}{\rm supp}(C)$ is finite, it follows
that $\tau(A_nC^2)\to\tau(C^2)$ and~$\tau((A_nC)^2)\to\tau(C^2)$.
This suffices to conclude the proof.
\end{proof}

\begin{lemma}\label{voic lemma} Let~$\mathcal{M}$ be a von Neumann algebra equipped with a faithful
normal semifinite trace~$\tau$. Let~$A\in\mathcal{M}$ and let
$\{p_n\}_{n\geq0}\subset\mathcal{M}$  be a sequence of
$\tau$-finite projections. If~$\|[A,p_n]\|_2\to0$, then, for every
$m\geq1$, we have
$$\tau((p_nAp_n)^m-A^mp_n)\to0.$$
\end{lemma}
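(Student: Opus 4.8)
The plan is to move every copy of $p_n$ to the outside, turning $(p_nAp_n)^m$ into $p_nA^mp_n$ up to an error that vanishes in $\mathcal{L}_1$-norm, and then to invoke the tracial identity $\tau(A^mp_n)=\tau(p_nA^mp_n)$. Put $\varepsilon_n:=\|[A,p_n]\|_2$, so $\varepsilon_n\to0$ by hypothesis. The elementary identities $[A,p_n](1-p_n)=-p_nA(1-p_n)$ and $(1-p_n)[A,p_n]=(1-p_n)Ap_n$ give
\[
\|p_nA(1-p_n)\|_2\le\varepsilon_n,\qquad \|(1-p_n)Ap_n\|_2\le\varepsilon_n .
\]
Moreover, since $A^mp_n,\ p_nA^mp_n\in\mathcal{L}_1$ and $p_n^2=p_n$, cyclicity of the trace gives $\tau(A^mp_n)=\tau\big((A^mp_n)p_n\big)=\tau\big(p_n(A^mp_n)\big)=\tau(p_nA^mp_n)$. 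Hence it suffices to prove that $\|(p_nAp_n)^m-p_nA^mp_n\|_1\to0$.

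To that end I would expand $p_nA^mp_n$ by inserting a copy of $1=p_n+(1-p_n)$ between each pair of consecutive factors $A$. This produces $2^{m-1}$ terms: the term with $p_n$ in every internal slot is exactly $(p_nAp_n)^m$, while every other term $T_S$, indexed by the nonempty set $S\subseteq\{1,\dots,m-1\}$ of slots carrying $1-p_n$, contains at least one factor $1-p_n$. Writing $i_1=\min S$, $i_2=\max S$, I would group $T_S=LMR$, where $L=(p_nAp_n)^{\,i_1-1}\,p_nA(1-p_n)$ is the initial block up to and including the first factor $1-p_n$, $R=(1-p_n)Ap_n\,(Ap_n)^{\,m-i_2-1}$ is the final block from the last factor $1-p_n$ onwards, and $M$ is the (bounded) block in between, with $\|M\|_\infty\le\|A\|^{\,i_2-i_1}$; when $S$ is a singleton the block $M$ is trivial and the complementary projections in $L$ and $R$ simply collapse via $(1-p_n)(1-p_n)=1-p_n$. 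The displayed estimates give $\|L\|_2\le\|A\|^{\,i_1-1}\varepsilon_n$ and $\|R\|_2\le\|A\|^{\,m-i_2-1}\varepsilon_n$, so $\|T_S\|_1\le\|L\|_2\,\|M\|_\infty\,\|R\|_2\le\|A\|^{\,m-2}\varepsilon_n^{\,2}$. Summing over the at most $2^{m-1}$ terms yields $\|(p_nAp_n)^m-p_nA^mp_n\|_1\le 2^{m-1}\|A\|^{\,m-2}\varepsilon_n^{\,2}\to0$, and the proof is complete.

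The only delicate point — and the reason the estimate is organised in exactly this way — is that the $p_n$ are merely $\tau$-finite, so $\tau(p_n)$ may diverge and one cannot afford the crude bound $\|X\|_1\le\tau(p_n)^{1/2}\|X\|_2$. The remedy is to extract from each error term $T_S$ two genuinely $\mathcal{L}_2$-small factors, of the shape $p_nA(1-p_n)$ and $(1-p_n)Ap_n$, separated by a uniformly bounded middle factor; the resulting product of two copies of $\varepsilon_n$ is what forces convergence to zero. I expect the mild bookkeeping in the factorisation $T_S=LMR$ (matching up the junctions of complementary projections, and handling the degenerate singleton case) to be the only slightly fiddly part of the write-up.
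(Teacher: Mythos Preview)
Your proof is correct and follows essentially the same strategy as the paper: extract from each error term two off-diagonal factors $p_nA(1-p_n)$ and $(1-p_n)Ap_n$, each of $\mathcal{L}_2$-norm at most $\varepsilon_n$, sandwiching a uniformly bounded middle, so that the $\mathcal{L}_1$-estimate picks up $\varepsilon_n^2$. The only difference is bookkeeping: the paper telescopes $(p_nAp_n)^m-p_nA^m$ into $m-1$ terms of the form $p_nA^k(1-p_n)(Ap_n)^{m-k}$ (yielding the sharper constant $\tfrac{m(m-1)}{2}$) rather than your full binary expansion into $2^{m-1}-1$ terms.
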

\begin{proof} For~$m=1$ the assertion is obvious. For every~$m\geq 2$, we have
\begin{multline*}
|\tau((p_nAp_n)^m-A^mp_n)|=|\sum_{k=1}^{m-1}p_nA^k(1-p_n)(Ap_n)^{m-k}|\leq\\
\sum_{k=1}^{m-1}\|p_nA^k(1-p_n)\|_2\|(1-p_n)Ap_n\|_2\|A\|_{\infty}^{m-k-1}.
\end{multline*}
Since~$(1-p_n)Ap_n=[A,p_n]p_n$ and~$p_nA^k(1-p_n)=p_n[p_n,A^k]$, and since
$$[p_n,A^k]=\sum_{l=0}^{k-1}A^l[p_n,A]A^{k-1-l},$$
it follows that
\begin{multline*}
|\tau((p_nAp_n)^m-A^mp_n)|\leq \\\sum_{k=1}^{m-1}k\|[p_n,A]\|_2\|A\|_{\infty}^{k-1}\cdot\|[A,p_n]\|_2\cdot
\|A\|_{\infty}^{m-k-1} \\ =\frac{m(m-1)}{2}\|[A,p_n]\|_2^2\|A\|_{\infty}^{m-2}.
\end{multline*}
\end{proof}

\subsection*{The class~$W_1$}

Originally Theorem~\ref{thm:KTFgeneral} is proved for the function of class~$W_1$. That is,
$$W_1=\left\{f\in S'(\mathbb R):\quad\mathcal{F}(f')\in L_1(\mathbb{R})\right\},$$
where~$S'(\mathbb R)$ is the class of all tempered distributions on
$\mathbb{R}$, $\mathcal{F}$ is the Fourier transform and
$L_1(\mathbb{R})$ is the Lebesgue space of all integrable functions on
$\mathbb{R}$. The class~$W_1$ is equipped with the semi-norm
$$\left\|f\right\|_{W_1}=\left\|\mathcal{F}(f')\right\|_1.$$

We need the following simple observation which directly follows
from the fact that the class~$S(\mathbb{R})$ of all Schwartz
functions is dense in~$L_1(\mathbb{R})$.

\begin{lemma}\label{ApproxLemma}
The class of primitives of functions in~$S(\mathbb{R})$ is dense
in~$W_1$, that is for every~$f\in W_1$, there exists a sequence
$f_n\in C^1_b(\mathbb{R})$ such that~$f_n'\in S(\mathbb{R})$ and
$$\lim_{n\rightarrow\infty}\left\|f-f_n\right\|_{W_1}=0.$$
\end{lemma}

\begin{theorem}\label{ps main} Let~$\mathcal{M}$ be a von Neumann algebra equipped with a faithful normal
semifinite trace~$\tau$. If~$A,B$ are unbounded self-adjoint
operators affiliated with~$\mathcal{M}$ such that
$A-B\in\mathcal{L}_1$, then~$f(A)-f(B)\in\mathcal{L}_1$ for every
$f\in W_1$. Moreover, we have
$$\|f(A)-f(B)\|_1\leq\|f\|_{W_1}\|A-B\|_1,$$
$$\tau(f(A)-f(B))=\int_0^1\tau(f'((1-z)A+zB)(A-B))dz$$
for every~$f\in W_1.$
\end{theorem}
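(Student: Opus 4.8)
The plan is to reduce everything, via the Fourier transform, to a single norm estimate and a single trace identity for the unitary groups $\{e^{isA}\}_{s\in\mathbb R}$, $\{e^{isB}\}_{s\in\mathbb R}$. Normalizing $\mathcal F$ so that $f'(x)=\int_{\mathbb R}e^{isx}\,d\nu(s)$ for a finite (complex) measure $\nu$ with $\|\nu\|=\|f\|_{W_1}$, one has $f(x)-f(y)=\int_{\mathbb R}\frac{e^{isx}-e^{isy}}{is}\,d\nu(s)$, hence
$$f(A)-f(B)=\int_{\mathbb R}\frac{e^{isA}-e^{isB}}{is}\,d\nu(s)$$
as a Bochner integral in $\mathcal L_1$, \emph{provided} the key estimate
$$\|e^{isA}-e^{isB}\|_1\le|s|\,\|A-B\|_1,\qquad s\in\mathbb R,$$
holds; indeed this forces $e^{isA}-e^{isB}\in\mathcal L_1$, bounds the integrand in $\|\cdot\|_1$ by $\|A-B\|_1$ uniformly in $s$ (so the integral converges, yielding $\|f(A)-f(B)\|_1\le\|f\|_{W_1}\|A-B\|_1$), and $s\mapsto(e^{isA}-e^{isB})/is$ is then easily seen to be $\|\cdot\|_1$-continuous, extending at $s=0$ to $A-B$. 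For the trace formula the extra ingredient is the Duhamel identity
$$\tau\!\left(e^{isA}-e^{isB}\right)=is\int_0^1\tau\!\left(e^{isA_z}(A-B)\right)dz,\qquad A_z:=(1-z)A+zB,$$
after which one applies $\tau$ (which is $\|\cdot\|_1$-bounded) to the representation above, substitutes this identity, interchanges the two integrations (the double integrand being dominated by $|\mathcal F(f')(s)|\,\|A-B\|_1$), and finally uses $\int_{\mathbb R}e^{isA_z}\,d\nu(s)=f'(A_z)$ (spectral theorem) together with the normality of $X\mapsto\tau(X(A-B))$, valid since $A-B\in\mathcal L_1\cong\mathcal M_*$.

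The two displayed facts about exponentials carry the proof, and I would establish them first for \emph{bounded} $A,B$ and then bootstrap. In the bounded case $z\mapsto e^{iszA}$ is norm-analytic, so the classical Duhamel formula $e^{isA}-e^{isB}=is\int_0^1 e^{iszA}(A-B)e^{is(1-z)B}\,dz$ holds as a norm-convergent integral; since $A-B\in\mathcal L_1$ and unitaries act isometrically on $\mathcal L_1$, each summand has $\|\cdot\|_1$-norm $\|A-B\|_1$ and the estimate follows, and differentiating $z\mapsto\tau(e^{isA_z}-e^{isA})$ (subtracting $e^{isA}$ keeps all traces finite in the semifinite setting) by the same device gives the trace identity. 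To pass to unbounded $A,B$ with $V:=A-B\in\mathcal L_1$, I would truncate \emph{both} operators: $A_k:=AE_A([-k,k])$, $V_k:=VE_{|V|}([1/k,k])$, $B_k:=A_k-V_k$, so that $A_k,B_k,V_k$ are bounded, $V_k$ has $\tau$-finite support, $V_k\to V$ in $\mathcal L_1$, and $A_k\to A$, $B_k\to B$ in the strong resolvent sense (hence $e^{isA_k}\to e^{isA}$, $e^{isB_k}\to e^{isB}$ strongly; this uses that $(1-z)A+zB$ is a self-adjoint operator affiliated with $\mathcal M$, which we take from the framework of \cite{PSW}). The key technical input is that one-sided multiplication by a fixed element of $\mathcal L_1$ is continuous from the strong operator topology on bounded sets into $(\mathcal L_1,\|\cdot\|_1)$ — proved by approximating the $\mathcal L_1$-element by a bounded one of $\tau$-finite support and using Cauchy--Schwarz together with the strong continuity of $\tau$ on finite von Neumann algebras, as in the proof of Lemma~\ref{pr lemma}. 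Thus the right-hand side of the bounded Duhamel formula for the pair $(A_k,B_k)$ converges in $\mathcal L_1$ while the left-hand side converges strongly, so the two limits agree: this yields $e^{isA}-e^{isB}\in\mathcal L_1$ and the key estimate, and the parallel passage to the limit in the trace identity (using in addition that $\tau(V_k\,\cdot\,)\to\tau(V\,\cdot\,)$ and $e^{isA^{(z)}_k}\to e^{isA_z}$ $\sigma$-weakly) establishes it for unbounded $A,B$.

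With the two exponential facts in hand, the theorem for $f$ with $f'\in S(\mathbb R)$ follows as in the first paragraph — here $f$ is automatically bounded, which makes the identification $f(A_k)-f(B_k)\to f(A)-f(B)$ in measure immediate — and the general case $f\in W_1$ follows by density: by Lemma~\ref{ApproxLemma} there are $f_n$ with $f_n'\in S(\mathbb R)$ and $\|f-f_n\|_{W_1}\to0$; after normalizing the additive constants (which affect neither $f_n(A)-f_n(B)$ nor the trace formula) one has $f_n\to f$ locally uniformly and $f_n'\to f'$ uniformly, $(f_n(A)-f_n(B))$ is $\|\cdot\|_1$-Cauchy by the norm inequality applied to $f_n-f_m$, its $\mathcal L_1$-limit is identified with $f(A)-f(B)$ via convergence in $\tau$-measure (using the uniform bound $|f_n(\lambda)|\le C(1+|\lambda|)$), and the trace formula passes to the limit since $\tau$ is $\|\cdot\|_1$-continuous, $f_n'(A_z)\to f'(A_z)$ $\sigma$-weakly, $\tau(\,\cdot\,(A-B))$ is normal, and the integrands are uniformly dominated.

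The main obstacle is exactly the rigor of the Duhamel step for \emph{unbounded} $A,B$: since $z\mapsto e^{iszA}$ is only strongly, not norm, continuous, the identity cannot be obtained by naive differentiation and must be recovered from the bounded case by the double-truncation approximation. What makes that approximation go through — and what I expect to demand the most care — is the strong-operator-to-$\|\cdot\|_1$ continuity of one-sided multiplication by $\mathcal L_1$-elements, the strong resolvent convergence of the truncated convex combinations $A^{(z)}_k\to(1-z)A+zB$ (and the self-adjointness of the latter), and matching limits taken in $\tau$-measure with those taken in $\|\cdot\|_1$.
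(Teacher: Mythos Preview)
Your approach is essentially the paper's: the norm inequality is obtained from the Duhamel formula
$e^{iC}-e^{iD}=i\int_0^1 e^{itC}(C-D)e^{i(1-t)D}\,dt$, giving $\|e^{isA}-e^{isB}\|_1\le|s|\,\|A-B\|_1$, and then the Fourier representation $f(A)-f(B)=\int_{\mathbb R}(\mathcal F f)(s)(e^{isA}-e^{isB})\,ds$ yields $\|f(A)-f(B)\|_1\le\|f\|_{W_1}\|A-B\|_1$. The paper writes exactly this, attributing the argument to Widom, and does not spell out the passage from bounded to unbounded $A,B$ at all.

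The one genuine divergence is the trace identity. The paper does not prove it: it simply cites Birman--Solomyak \cite{BS1972-onSSF} (equation (2.1) there) and moves on. Your self-contained derivation via the Duhamel trace identity $\tau(e^{isA}-e^{isB})=is\int_0^1\tau(e^{isA_z}(A-B))\,dz$, followed by Fubini and the Fourier inversion $\int e^{isA_z}\,d\nu(s)=f'(A_z)$, is the natural direct argument and is what the Birman--Solomyak reference ultimately does. Your truncation scheme $A_k=AE_A([-k,k])$, $V_k=VE_{|V|}([1/k,k])$, $B_k=A_k-V_k$ and the identification of limits via strong resolvent convergence plus the $\mathrm{SOT}\to\|\cdot\|_1$ continuity of one-sided multiplication by $\mathcal L_1$-elements is a correct way to bootstrap from bounded to unbounded operators, and you have correctly flagged that this step (and the self-adjointness of $(1-z)A+zB$) is where the real work lies. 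None of this appears in the paper's two-line treatment; what you have written is a more honest accounting of the same proof.
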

\begin{proof} The first inequality is proved by Widom~\cite{Widom}. We present it here for convenience of the reader.
It follows from the equality
$$e^{iC}-e^{iD}=i\int_0^1e^{itC}(C-D)e^{i(1-t)D}dt$$
that
$$\|e^{iC}-e^{iD}\|_1\leq\|C-D\|_1.$$
Hence,
$$\|e^{isA}-e^{isB}\|_1\leq |s|\|A-B\|_1.$$
Recall the obvious equality
$$f(A)-f(B)=\int_{\mathbb{R}}(\mathcal{F}f)(s)(e^{isA}-e^{isB})ds.$$
It follows that
$$\|f(A)-f(B)\|_1\leq\int_{\mathbb{R}}|(\mathcal{F}f)(s)|\cdot\|e^{isA}-e^{isB}\|_1ds\leq$$
$$\leq\int_{\mathbb{R}}|s(\mathcal{F}f)(s)|\cdot\|A-B\|_1ds=\|f\|_{W_1}\cdot\|A-B\|_1.$$

The second equality is proved in~\cite{BS1972-onSSF} (see equation
(2.1) there).
\end{proof}

\section{Krein's theorem in semifinite setting}

The present section proves the following theorem in complete
generality. In the setting ${\mathcal M} ={\mathcal L}({\mathcal
H})$ the result is originally due to M.G. Krein.

\begin{theorem}\label{thm:KTFgeneral}
Let~$\mathcal{M}$ be a~$\sigma$-finite von Neumann algebra with a
faithful normal semifinite trace~$\tau$. If self-adjoint operators
$A,B$ affiliated with~$\mathcal{M}$ are such that
$A-B\in\mathcal{L}_1$, then there is a function~$\xi=\xi_{A, B}\in
L_1(\mathbb{R})$ such that
\begin{equation}\label{KTFgeneral}
\tau(f(A)-f(B))=\int_{\mathbb{R}}f'(s)\xi(s)ds
\end{equation}
for every~$f\in W_1.$  In addition, we have
$$\int_{\mathbb{R}}| \xi(t)|\,dt\leq\left\|V\right\|_1,\quad \int_{\mathbb{R}}\xi(t)dt=\tau(V).$$
\end{theorem}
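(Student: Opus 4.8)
The plan is to construct $\xi$ as a weak-$*$ limit of spectral shift functions for finite-dimensional-like compressions, using Theorem~\ref{wvn} to reduce to the $\sigma$-finite finite case and then using Lemma~\ref{lemma} to get the $L_1$-bound on the candidate densities uniformly. First I would reduce to the case of bounded perturbations: writing $V=A-B\in\mathcal{L}_1$, a routine approximation (truncating $V$ in $\mathcal{L}_1$, using Theorem~\ref{ps main} to control $\|f(A)-f(B)\|_1$ under such truncations, and passing to the limit in $L_1$ of the resulting $\xi$'s) lets me assume in addition that $V$ is bounded, and by a further spectral cut-off that $A,B\in\mathcal{M}$ are bounded self-adjoint. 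Then I would invoke Theorem~\ref{wvn}: choose $\tau$-finite projections $p_n\uparrow 1$ with $\|[A,p_n]\|_2\to 0$; since $V=A-B$ has $\tau(\mathrm{supp}(V))<\infty$, Lemma~\ref{pr lemma} gives $\|[V,p_n]\|_2\to 0$ as well, hence $\|[B,p_n]\|_2\to 0$.

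Next I would compare $f(A)-f(B)$ with $f(p_nAp_n)-f(p_nBp_n)$, both computed inside the \emph{finite} algebra $p_n\mathcal{M}p_n$. For polynomials $f$ this comparison is exactly Lemma~\ref{voic lemma}:
\[
\tau\bigl((p_nAp_n)^m - A^m p_n\bigr)\to 0,\qquad \tau\bigl((p_nBp_n)^m - B^m p_n\bigr)\to 0,
\]
so $\tau\bigl(f(p_nAp_n)-f(p_nBp_n)\bigr)-\tau\bigl(f(A)-f(B)\bigr)\to 0$ for every polynomial, and then for every $f\in W_1$ by the density Lemma~\ref{ApproxLemma} together with the uniform Lipschitz estimate of Theorem~\ref{ps main} applied in both $\mathcal{M}$ and $p_n\mathcal{M}p_n$ (note $p_nVp_n\to V$ in $\mathcal{L}_1$, so $\|p_nAp_n-p_nBp_n\|_1$ stays bounded). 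Inside the finite algebra $p_n\mathcal{M}p_n$ with $\tau(p_n)<\infty$, set
\[
\xi_n(s) := n_{p_nAp_n}(s) - n_{p_nBp_n}(s),
\]
where the distribution functions are taken relative to the compressed algebra. An integration-by-parts / spectral-theorem computation in a finite algebra (the same one quoted before Lemma~\ref{lemma}) gives $\tau\bigl(f(p_nAp_n)-f(p_nBp_n)\bigr)=\int_{\mathbb{R}}f'(s)\xi_n(s)\,ds$ for $f\in W_1$, first for $f$ with $f'\in S(\mathbb{R})$ and then in general.

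The main obstacle is the \textbf{uniform $L_1$-bound} $\|\xi_n\|_1\le \|p_nVp_n\|_1\le\|V\|_1$, which is what allows a weak-$*$ subsequential limit $\xi_n\to\xi$ in the dual pair $(L_1,L_\infty)$ — or rather in $(W_1', W_1)$, from which \eqref{KTFgeneral} follows by passing to the limit in the identity for $\xi_n$, and $\|\xi\|_1\le\|V\|_1$ by lower semicontinuity. To get $\|\xi_n\|_1\le\|p_nVp_n\|_1$ I would split $\xi_n=\xi_n^+-\xi_n^-$ and use Lemma~\ref{lemma}: writing $p_nAp_n = p_nBp_n + p_nVp_n \le p_nBp_n + (p_nVp_n)_+$ and $p_nAp_n \ge p_nBp_n - (p_nVp_n)_-$, monotonicity of $n$ under $\ge$ gives the pointwise bounds $\xi_n(s)\le n_{(p_nVp_n)_+}(s^-)$-type estimates, and a standard argument (e.g. via the inequality $n_{A+B}(s+t)\le n_A(s)+n_B(t)$ applied with small $t$ and integrating, or directly via the rearrangement identity $\int|\,n_{C+V}-n_C\,|\le\|V\|_1$ valid in finite algebras) yields $\int_{\mathbb R}|\xi_n(s)|\,ds\le\tau(|p_nVp_n|)\le\tau(|V|)=\|V\|_1$. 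Finally, the normalization $\int_{\mathbb R}\xi(t)\,dt=\tau(V)$ is obtained by applying \eqref{KTFgeneral} to a sequence $f_k\in W_1$ with $f_k'\to 1$ pointwise boundedly (so $f_k(t)\sim t$ on the joint spectrum), using $\tau(f_k(A)-f_k(B))\to\tau(A-B)=\tau(V)$ from Theorem~\ref{ps main} and dominated convergence on the right-hand side; equivalently, it already passes to the limit from $\int\xi_n=\tau(p_nVp_n)\to\tau(V)$.
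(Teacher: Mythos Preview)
Your overall architecture---compress by $\tau$-finite projections $p_n$ via Theorem~\ref{wvn}, use Lemma~\ref{voic lemma} to match traces of polynomials, set $\xi_n=n_{p_nAp_n}-n_{p_nBp_n}$ in the finite algebra, and pass to a limit---is exactly the paper's, and your $L_1$-bound $\|\xi_n\|_1\le\|p_nVp_n\|_1$ is correct (this is Lemma~\ref{fin case}\eqref{finb}, proved just as you sketch via $C=A+(B-A)_+$). The gap is at the limit step. You claim the uniform bound $\|\xi_n\|_1\le\|V\|_1$ ``allows a weak-$*$ subsequential limit $\xi_n\to\xi$ in the dual pair $(L_1,L_\infty)$'': it does not, because $L_1$ is not a dual space and its unit ball is not weakly (or weak-$*$) sequentially compact. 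A bounded sequence in $L_1$ has subnets converging only in the space of finite Borel measures, and the limit may be singular with respect to Lebesgue measure---this is precisely the obstruction that stalled Birman--Solomyak (see the Introduction). Your hedge ``or rather in $(W_1',W_1)$'' does not help: the weak-$*$ limit in $W_1'$ is again a priori just a measure, and you give no argument that it lies in $L_1$; the lower-semicontinuity remark presupposes what you must prove.

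The paper's fix is to extract, in addition to the $L_1$-bound, the $L_\infty$-bound $\|\xi_n\|_\infty\le\tau(\mathrm{supp}(A-B))$ from Lemma~\ref{fin case}\eqref{finb}, valid under the extra hypothesis $\tau(\mathrm{supp}(A-B))<\infty$. With $\xi_n$ bounded in $L_\infty[0,a]=(L_1[0,a])^*$, Banach--Alaoglu gives a weak-$*$ limit that is automatically a bounded \emph{function} (Lemma~\ref{main lemma}). That extra hypothesis is then removed in a separate monotone-convergence step, approximating $A-B$ from below by operators $D_n$ of $\tau$-finite support (Lemma~\ref{prel}). A secondary issue: your reduction to bounded $A,B$ by ``spectral cut-off'' is not routine---truncating $A$ and $B$ separately by their own spectral projections does not in general keep $A-B$ in $\mathcal{L}_1$ nor approximate it there. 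The paper instead composes with a $C^2$-bijection $h:\mathbb{R}\to(0,1)$ to convert the unbounded pair into a bounded one while preserving trace-class differences (Lemma~\ref{FinalStepI}).
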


\subsection*{Proof of Theorem~\ref{thm:KTFgeneral}}

We shall approach the proof of Theorem~\ref{thm:KTFgeneral} via
step by step relaxing of conditions on the trace~$\tau$, function
$f$ and the operators~$A$ and~$B$. This is presented as a series
of lemmas from Lemma~\ref{fin case} to Lemma~\ref{main theorem}.
The final extension to the class~$W_1$ is given in
Lemma~\ref{toW1ext}. For reader's convenience, we denote the
function $\xi_{A, B}$ at different stages with different indices.

We start with rather restrictive case as in the following lemma.
The lemma was noted yet by Krein~\cite{Krein1,Krein2} (see also
p.360 in~\cite{ADS}).

\begin{lemma}\label{fin case}
Suppose that the assumptions of Theorem~\ref{thm:KTFgeneral} hold.
\begin{enumerate}[{\rm (a)}]
\item\label{fina} If~$\tau$ is finite and if~$A,B\in\mathcal{M}$,
then~(\ref{KTFgeneral}) holds with~$\xi=\xi^{(1)}_{A,B}=n_A-n_B$.
The function~$\xi$ is integrable and is supported on
$[0,\max\{\|A\|_{\infty},\|B\|_{\infty}\}]$. \item\label{finb}
Furthermore,
$$\|\xi^{(1)}_{A,B}\|_{\infty}\leq\tau({\rm supp}(A-B)) \ \text{and}\ \ \|\xi^{(1)}_{A,B}\|_1\leq\|A-B\|_1.$$
\item\label{finc} If~$A\geq B$, then~$\|\xi^{(1)}_{A,B}\|_1=\|A-B\|_1$.
\end{enumerate}
\end{lemma}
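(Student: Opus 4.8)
My plan is to reduce everything to the scalar formula already available from Theorem~\ref{ps main}, namely $\tau(f(A)-f(B)) = \int_0^1 \tau(f'((1-z)A+zB)(A-B))\,dz$, and the identity $\|n_C\|_1 = \tau(C)$ for $C \in S(\tau)^+$ recalled in the Preliminaries. For part~(\ref{fina}), the natural route is the one attributed to Krein: for $f \in W_1$ with $f' \in S(\mathbb{R})$ (which suffices by Lemma~\ref{ApproxLemma} once the estimates are in place), write $f(x) = \int_{\mathbb{R}} f'(s)\,\chi_{(s,\infty)}(x)\,ds + \text{const}$ — more precisely $f(x)-f(y) = \int_{\mathbb{R}} f'(s)(\chi_{(s,\infty)}(x) - \chi_{(s,\infty)}(y))\,ds$, valid pointwise. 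Applying the functional calculus and the (finite, normal) trace $\tau$, and using $\tau(E_A(s,\infty) - E_B(s,\infty)) = n_A(s) - n_B(s)$, I would get $\tau(f(A)-f(B)) = \int_{\mathbb{R}} f'(s)(n_A(s)-n_B(s))\,ds$, which is exactly \eqref{KTFgeneral} with $\xi^{(1)}_{A,B} = n_A - n_B$. Fubini is justified once we know $n_A - n_B \in L_1$, which is part~(\ref{finb}); since $A, B$ are bounded, $n_A$ and $n_B$ are supported on $(-\infty, \|A\|_\infty]$ and $(-\infty, \|B\|_\infty]$ respectively, and for $s < 0$ one has $E_A(s,\infty) = E_B(s,\infty) = 1$ so $n_A(s) = n_B(s) = \tau(1)$; hence $n_A - n_B$ is supported on $[0, \max\{\|A\|_\infty, \|B\|_\infty\}]$.

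For the $L_1$-bound in part~(\ref{finb}), I would invoke Theorem~\ref{ps main}: applying it with any $f \in W_1$ gives $|\tau(f(A)-f(B))| = |\int_{\mathbb{R}} f'(s)\xi^{(1)}_{A,B}(s)\,ds| \leq \|f(A)-f(B)\|_1 \leq \|f\|_{W_1}\|A-B\|_1 = \|f'\|_1 \|A-B\|_1$ when $f' \in S(\mathbb{R})$ (noting $\|f\|_{W_1} = \|\mathcal{F}(f')\|_1$, so one should pick $f'$ whose Fourier transform is controlled). Taking the supremum over such $f$ with $\|\mathcal{F}(f')\|_1 \leq 1$ and using density of $S(\mathbb{R})$ in $L_1$ together with Fourier inversion, one extracts $\|\xi^{(1)}_{A,B}\|_1 \leq \|A-B\|_1$ by duality of $L_1$ and $L_\infty$ — here I should be slightly careful and instead test against functions $f$ with $f' = \mathcal{F}^{-1}g$ for $g$ a suitable bump, so that $\|f\|_{W_1} = \|g\|_1$; letting $g$ approximate (the sign of $\xi^{(1)}_{A,B}$ times) a delta-like kernel gives the bound. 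The $L_\infty$-bound $\|\xi^{(1)}_{A,B}\|_\infty \leq \tau(\mathrm{supp}(A-B))$ is the more structural point: I would argue pointwise in $s$ that $|n_A(s) - n_B(s)| = |\tau(E_A(s,\infty)) - \tau(E_B(s,\infty))| \leq \tau(\mathrm{supp}(A-B))$, using that $E_A(s,\infty)$ and $E_B(s,\infty)$ differ by an operator supported (in the sense of left/right supports) inside $\mathrm{supp}(A-B)$ — equivalently, if $A$ and $B$ agree on a subspace on which $A-B$ vanishes, the corresponding spectral projections agree there, so the difference of their traces is controlled by the trace of the support of the perturbation. Making this precise is where Lemma~\ref{lemma} and a min–max/interlacing argument come in.

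For part~(\ref{finc}): if $A \geq B$, then Lemma~\ref{lemma} gives $n_A(s) \geq n_B(s)$ for all $s$, so $\xi^{(1)}_{A,B} = n_A - n_B \geq 0$ pointwise, and therefore $\|\xi^{(1)}_{A,B}\|_1 = \int_{\mathbb{R}} (n_A(s) - n_B(s))\,ds = \tau(f_0(A)) - \tau(f_0(B))$ for $f_0$ the primitive of $\chi_{(0,\infty)}$ shifted appropriately — but more directly, integrating the already-established formula $\int_{\mathbb{R}}(n_A - n_B) = \int_{\mathbb{R}} \chi_{[0,\infty)}'$-type limit, one gets $\int_{\mathbb{R}}(n_A(s)-n_B(s))\,ds = \tau(A) - \tau(B) = \tau(A-B)$. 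Since $A - B \geq 0$ here, $\tau(A-B) = \|A-B\|_1$, giving equality. The one subtlety is justifying $\int_{\mathbb{R}}(n_A(s) - n_B(s))\,ds = \tau(A-B)$ when $\tau(1) < \infty$: write it as $\lim_{R\to\infty}\int_{-R}^{R}(n_A - n_B)$, split using $\int_0^R n_C(s)\,ds \to \tau(C_+)$ and the analogous statement on $(-R,0)$ for the negative parts, and recombine; finiteness of $\tau$ makes all these integrals finite and the bookkeeping clean. I expect the main obstacle to be the $L_\infty$-estimate $\|\xi^{(1)}_{A,B}\|_\infty \leq \tau(\mathrm{supp}(A-B))$, since it requires comparing spectral projections of $A$ and $B$ rather than the operators themselves; the cleanest fix is likely a two-sided application of \eqref{nsum} from Lemma~\ref{lemma}, bounding $n_A(s) = n_{B + (A-B)}(s)$ above by $n_B(s-\varepsilon) + n_{A-B}(\varepsilon)$ and symmetrically, then letting $\varepsilon \downarrow 0$ and using right-continuity together with $n_{A-B}(0^+) \leq \tau(\mathrm{supp}(A-B))$.
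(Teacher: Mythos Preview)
Your treatment of parts~(\ref{fina}) and~(\ref{finc}) is essentially the paper's argument: the spectral integration-by-parts formula $\tau(f(A)) = \int f'(s)\,n_A(s)\,ds$ gives~(\ref{KTFgeneral}) directly, and when $A \geq B$ the monotonicity $n_A \geq n_B$ from Lemma~\ref{lemma} turns the $L_1$-norm of $\xi^{(1)}_{A,B}$ into $\tau(A-B) = \|A-B\|_1$. Your $L_\infty$-estimate is also morally the paper's; the $\varepsilon \downarrow 0$ limit is unnecessary since Lemma~\ref{lemma} already permits $t=0$ in~(\ref{nsum}), and the paper simply uses $A \leq B + |A-B|$ together with~(\ref{nsum}) at $s=t$, $t'=0$ to get $n_A(t) - n_B(t) \leq n_{|A-B|}(0) = \tau(\mathrm{supp}(A-B))$ in one stroke.

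The genuine gap is your argument for $\|\xi^{(1)}_{A,B}\|_1 \leq \|A-B\|_1$. Duality against Theorem~\ref{ps main} cannot deliver this: that theorem yields
\[
\Bigl|\int_{\mathbb{R}} f'(s)\,\xi(s)\,ds\Bigr| \leq \|\mathcal{F}(f')\|_1\,\|A-B\|_1,
\]
and taking the supremum over test functions with $\|\mathcal{F}(f')\|_1 \leq 1$ recovers at best $\|\mathcal{F}(\xi)\|_\infty$, not $\|\xi\|_1$. To extract $\|\xi\|_1$ by duality you would need $f'$ to approximate $\mathrm{sign}(\xi)$ in $L_\infty$, but such $f'$ have $\|\mathcal{F}(f')\|_1$ unbounded in general, so the $W_1$-estimate is useless here. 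Your suggested fix (take $f' = \mathcal{F}^{-1}g$ with $g$ a bump) only reproduces the same Fourier bound. The paper's argument is purely order-theoretic and bypasses Theorem~\ref{ps main} entirely: set $C := A + (B-A)_+ = B + (B-A)_-$, so that $C \geq A$ and $C \geq B$; then $n_C \geq n_A$ and $n_C \geq n_B$, and one reads off
\[
\|n_C - n_A\|_1 = \tau(C-A) = \tau((B-A)_+),\qquad \|n_C - n_B\|_1 = \tau(C-B) = \tau((B-A)_-),
\]
whence $\|n_A - n_B\|_1 \leq \tau((B-A)_+) + \tau((B-A)_-) = \|A-B\|_1$ by the triangle inequality. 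This ``majorant'' trick is the missing idea.
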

\begin{proof} (\ref{fina}) Indeed, it follows from the functional calculus that
$$f(A)=-\int_{-\infty}^{\infty}f(s)dE_A(s,\infty).$$
Taking the trace and integrating by parts, we obtain
$$\tau(f(A))=-\int_{-\infty}^{\infty}f(s)dn_A(s)=\int_{-\infty}^{\infty}f'(s)n_A(s)ds.$$
The equation~(\ref{KTFgeneral}) follows immediately. Since~$n_A$
and~$n_B$ are integrable, then so is~$\xi^{(1)}_{A,B}$.

(\ref{finb}) It is clear that~$A\leq B+|A-B|$ and, therefore,
$$n_A(t)\leq n_{A+|B-A|}(t)\stackrel{(\ref{nsum})}{\leq} n_B(t)+n_{|A-B|}(0).$$
Similarly, we have~$B\leq A+|A-B|$ and, therefore,
$$n_B(t)\leq n_{B+|B-A|}(t)\stackrel{(\ref{nsum})}{\leq} n_A(t)+n_{|A-B|}(0).$$
Combining these inequalities, we obtain~$|n_A-n_B|\leq
n_{|A-B|}(0)$, which proves the first assertion. In order to prove
the second inequality, observe that
$$|B-A|=(B-A)_++(B-A)_-\ \ \text{and}\ \ B-A=(B-A)_+-(B-A)_-.$$
Set~$C:=A+(B-A)_+=B-(B-A)_-$. We then have~$C\geq A$ and~$C\geq
B$. In particular, $n_C\geq n_A$ and~$n_C\geq n_B$. Therefore,
$$\|n_C-n_A\|_1=\|n_C\|_1-\|n_A\|_1=\tau(C)-\tau(A)=\tau((B-A)_+),$$
$$\|n_C-n_B\|_1=\|n_C\|_1-\|n_B\|_1=\tau(C)-\tau(B)=\tau((B-A)_-).$$
Hence,
$$\|n_A-n_B\|_1\leq\|n_C-n_A\|_1+\|n_C-n_B\|_1\leq\tau((B-A)_+)+\tau((B-A)_-)=\|A-B\|_1.$$
\end{proof}

The key step in our approach is the extension of Lemma~\ref{fin
case} to the following lemma via approximation process set out in
Theorem~\ref{wvn} and Lemmas~\ref{pr lemma} and~\ref{voic lemma}.

\begin{lemma}\label{main lemma} Suppose that the assumptions of Theorem~\ref{thm:KTFgeneral} hold.
If~$A\geq B\geq 0$, and if~$\tau({\rm supp}(A-B))<\infty$,  then~(\ref{KTFgeneral}) holds for every~$f(s)=s^m$, $m\in\mathbb{N}$,
and for the compactly supported positive function
$\xi=\xi^{(2)}_{A,B}$. If the conditions of Lemma~\ref{fin case}
are also met, then~$\xi^{(1)}_{A,B}=\xi^{(2)}_{A,B}$.
\end{lemma}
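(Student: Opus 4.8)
The plan is to produce $\xi^{(2)}_{A,B}$ as a weak-$*$ limit of the integrable functions that Lemma~\ref{fin case} attaches to suitable \emph{finite} compressions of $A$ and $B$, and then to carry the identity \eqref{KTFgeneral} through that limit. We may assume $A,B\in\mathcal M$: otherwise we set $r_N:=1-(E_A(N,\infty)\vee E_B(N,\infty))\uparrow 1$, observe that $r_NAr_N\geq r_NBr_N\geq 0$ are bounded, that $r_N(A-B)r_N$ still has support of finite trace (one uses the elementary bound $\tau(\mathrm{supp}(pCp))\leq\tau(\mathrm{supp}(C))$ valid for any projection $p$ and any $C=C^*$), run the argument below for the pair $(r_NAr_N,r_NBr_N)$, and let $N\to\infty$ exploiting $r_N\uparrow 1$ and normality of $\tau$. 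From now on $A,B\in\mathcal M$; put $C:=A-B$ and $R:=\max\{\|A\|_\infty,\|B\|_\infty\}$.

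\emph{The approximating sequence.} By Theorem~\ref{wvn} there are $\tau$-finite projections $p_n\uparrow 1$ with $\|[A,p_n]\|_2\to 0$; since $\tau(\mathrm{supp}(C))<\infty$, Lemma~\ref{pr lemma} gives $\|[C,p_n]\|_2\to 0$, whence $\|[B,p_n]\|_2\to 0$. The compressions $A_n:=p_nAp_n$ and $B_n:=p_nBp_n$ are self-adjoint elements of the finite von Neumann algebra $p_n\mathcal Mp_n$, with $A_n\geq B_n\geq 0$ and $\sigma(A_n)\cup\sigma(B_n)\subseteq[0,R]$. Applying Lemma~\ref{fin case} inside $p_n\mathcal Mp_n$ (replacing $s^m$ by an element of $W_1$ coinciding with it on $[0,R]$, which is harmless), we obtain
\begin{equation*}
\tau(A_n^m-B_n^m)=\int_{\mathbb R}(s^m)'\,\xi_n(s)\,ds,\qquad \xi_n:=n_{A_n}-n_{B_n}.
\end{equation*}
By Lemma~\ref{lemma}, $\xi_n\geq 0$; by Lemma~\ref{fin case}(\ref{finb}), $\|\xi_n\|_1\leq\|A_n-B_n\|_1\leq\|C\|_1$ and, crucially, $\|\xi_n\|_\infty\leq\tau(\mathrm{supp}(A_n-B_n))\leq\tau(\mathrm{supp}(C))<\infty$; moreover each $\xi_n$ vanishes off $[0,R]$.

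\emph{Passing to the limit.} For the left-hand side, Lemma~\ref{voic lemma} gives $\tau(A_n^m-A^mp_n)\to 0$ and $\tau(B_n^m-B^mp_n)\to 0$, so $\tau(A_n^m-B_n^m)=\tau((A^m-B^m)p_n)+o(1)$; since $A^m-B^m=\sum_{j=0}^{m-1}A^jCB^{m-1-j}\in\mathcal L_1$ and $p_n\uparrow 1$, normality of $\tau$ yields $\tau(A_n^m-B_n^m)\to\tau(A^m-B^m)$. For the right-hand side, the sequence $\{\xi_n\}$ lies in a fixed ball of $L_\infty(0,R)$ and hence, by Banach--Alaoglu, has a weak-$*$ convergent subsequence; any weak-$*$ cluster point $\xi^{(2)}_{A,B}$ lies in $L_\infty(0,R)\subset L_1(\mathbb R)$, is non-negative, vanishes off $[0,R]$, and satisfies $\|\xi^{(2)}_{A,B}\|_1\leq\|C\|_1$. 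Testing the displayed identity against $s^{m-1}\in L_1(0,R)$ and combining with the previous limit gives \eqref{KTFgeneral} for $f(s)=s^m$ with $\xi=\xi^{(2)}_{A,B}$, for every $m$. Since these identities prescribe every moment of $\xi^{(2)}_{A,B}$ on $[0,R]$ and an integrable function on $[0,R]$ all of whose moments vanish is zero a.e., the cluster point is unique and the whole sequence $\xi_n$ converges weak-$*$ to it. Finally, in the situation of Lemma~\ref{fin case} the function $\xi^{(1)}_{A,B}=n_A-n_B$ obeys the same moment identities by Lemma~\ref{fin case}(\ref{fina}), so the same argument forces $\xi^{(1)}_{A,B}=\xi^{(2)}_{A,B}$.

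\emph{Main obstacle.} The delicate point is the passage to the limit on the right-hand side: a priori the measures $\xi_n(s)\,ds$ could concentrate and converge to a \emph{singular} limit — precisely the difficulty that blocked the approach of Birman and Solomyak. What rescues the argument is the uniform bound $\|\xi_n\|_\infty\leq\tau(\mathrm{supp}(A-B))$ furnished by Lemma~\ref{fin case}(\ref{finb}): it confines the $\xi_n$ to a weak-$*$ compact subset of $L_\infty$ and thereby forces the limit to be a genuine bounded, hence integrable, function. The remaining ingredients — the reduction to bounded $A,B$, the inequality $\tau(\mathrm{supp}(pCp))\le\tau(\mathrm{supp}(C))$, and the $W_1$-truncation of $s^m$ — are routine.
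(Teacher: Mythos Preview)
Your argument from ``\emph{The approximating sequence}'' onward is correct and is essentially the paper's own proof: both use Theorem~\ref{wvn} and Lemma~\ref{pr lemma} to produce the $p_n$, Lemma~\ref{voic lemma} to pass $\tau(A_n^m-B_n^m)\to\tau(A^m-B^m)$, Lemma~\ref{fin case} for the functions $\xi_n$ together with the uniform $L_\infty$--bound coming from $\tau(\mathrm{supp}(A-B))<\infty$, and Banach--Alaoglu for the weak-$*$ limit; your remarks on uniqueness of the cluster point via moments and on $A^m-B^m\in\mathcal L_1$ are welcome refinements of what the paper leaves implicit.

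The only problematic part is the opening reduction to $A,B\in\mathcal M$. First, it is unnecessary: for $m\ge 2$ the quantity $\tau(A^m-B^m)$ is not even defined unless $A,B$ are bounded (for instance $A^2-B^2=A(A-B)+(A-B)B$ need not lie in $\mathcal L_1$, nor even in $\mathcal M$, when $A,B$ are unbounded), so the lemma is implicitly about $A,B\in\mathcal M$; this is how the paper's own proof reads it (it sets $a=\|A\|_\infty$) and how the lemma is invoked in Lemma~\ref{prel}. Second, your reduction as written has a gap: the claim $r_N\uparrow 1$ is \emph{not} automatic, since in general $e_N\uparrow 1$ and $f_N\uparrow 1$ do not force $e_N\wedge f_N\uparrow 1$ (take $e_N=E_X(-N,N)$ and $f_N=E_P(-N,N)$ for position and momentum on $L^2(\mathbb R)$: then $e_N\wedge f_N=0$ for every $N$). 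One would need to exploit $A\ge B$ or $A-B\in\mathcal L_1$ to justify it; a cleaner choice would have been $r_N=E_A(-\infty,N]$, which commutes with $A$ and still makes $r_NBr_N$ bounded because $0\le r_NBr_N\le r_NAr_N$. But the simplest fix is to drop the reduction altogether.
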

\begin{proof} By Theorem~\ref{wvn} and Lemma~\ref{pr lemma},
there exists a family of~$\tau$-finite projections~$p_n$,
$n\geq0$, such that~$p_n\uparrow 1$ and such that
$\|[A,p_n]\|_2\to0$ and~$\|[B,p_n]\|_2\to0$ as~$n\to\infty$. It
follows from Lemma~\ref{voic lemma} that
$$\tau((p_nAp_n)^m-A^mp_n)\to0,\quad\tau((p_nBp_n)^m-B^mp_n)\to0.$$
Hence,
$$\tau((p_nAp_n)^m-(p_nBp_n)^m)\to\tau(A^m-B^m).$$

Set~$a:=\|A\|_{\infty}$. Since~$A\geq B\geq0$ in~$\mathcal{M}$ it
follows that~$p_nAp_n\geq p_nBp_n\geq0$ in the algebra
$p_n\mathcal{M}p_n$. In particular, we have~$n_{p_nAp_n}\geq
n_{p_nBp_n}$ for all~$n\geq0$ in the algebra~$p_n\mathcal{M}p_n$.
By Lemma~\ref{fin case} (\ref{fina}), for every~$n\geq0$, there
exists a positive function~$\xi_n=\xi^{(1)}_{p_nAp_n,p_nBp_n}$
supported on~$[0,a]$ such that
$$\tau((p_nAp_n)^m-(p_nBp_n)^m)=\int_0^ams^{m-1}\xi_n(s)ds.$$
By Lemma~\ref{fin case} (\ref{finb}), we have
$$\|\xi_n\|_{\infty}\leq\tau({\rm supp}(A-B)),\quad \|\xi_n\|_1\leq\|A-B\|_1.$$
Since~$L_{\infty}[0,a]$ is a Banach dual for~$L_1[0,a]$, it
follows from the Banach-Alaoglu theorem that there exists a
directed set~$\mathbb{I}$ and the mapping
$\psi:\mathbb{I}\to\mathbb{Z}_+$ such that
\begin{enumerate}
\item for every~$n\in\mathbb{Z}_+$, there exists~$i(n)\in\mathbb{I}$ such that~$\psi(i)>n$ for~$i>i(n)$.
\item the net~$\xi_{\psi(i)}$, $i\in\mathbb{I}$ converges in weak$^*$ topology.
\end{enumerate}
We set
$$\xi^{(2)}_{A,B}=\lim_{i\in\mathbb{I}}\xi_{\psi(i)}.$$
Therefore,
\begin{multline}\label{tbmd}
\int_0^bms^{m-1}\xi^{(2)}_{A,B}(s)ds=\lim_{i\in I}\int_0^bms^{m-1}\xi_i(s)ds=\\
=\lim_{i\in I}\tau((p_{\psi(i)}Ap_{\psi(i)})^m-(p_{\psi(i)}Bp_{\psi(i)})^m)=\tau(A^m-B^m).
\end{multline}
This shows~(\ref{KTFgeneral}) with~$f(s)=s^m$. The
function~$\xi^{(2)}_{A,B}$ is positive as a weak$^*$-limit of
positive functions.

In order to see that~$\xi^{(1)}_{A,B}=\xi^{(2)}_{A,B}$, we simply write the trace formula~(\ref{KTFgeneral})
$$\int_{\mathbb{R}}ms^{m-1}\xi^{(1)}_{A,B}(s)ds\stackrel{Lemma\ \ref{fin case}}{=}\tau(A^m-B^m)\stackrel{(\ref{tbmd})}{=}\int_{\mathbb{R}}ms^{m-1}\xi^{(2)}_{A,B}(s)ds.$$
Identification now follows from the fact that the polynomials are a separating family of functionals on~$L_1([0,a])$.
\end{proof}

The following lemma removes the positivity assumption on the
operators~$A$ and~$B$ and the assumption that~$\tau({\rm
supp}(A-B))$ is finite.

\begin{lemma}\label{prel} Suppose that the assumptions of Theorem~\ref{thm:KTFgeneral} hold. If~$A,B\in\mathcal{M}$,
 then~(\ref{KTFgeneral}) holds for every~$f \in C^2(\mathbb{R})$ and for the unique compactly supported function
 $\xi=\xi^{(3)}_{A,B}$.  In addition,
$$\|\xi\|_1\leq\|A-B\|_1, \ \ \text{and}\ \ \int_{\mathbb{R}}\xi(s)\,ds=\tau(A-B).$$
If, in addition, the conditions of Lemma~\ref{main lemma} are met,
then~$\xi^{(2)}_{A,B}=\xi^{(3)}_{A,B}$.
\end{lemma}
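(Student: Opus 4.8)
\textbf{Proof strategy for Lemma~\ref{prel}.} The plan is to reduce the general bounded case to the situation already covered by Lemma~\ref{main lemma} by a shift-and-subtract argument on the operators together with a polarization of the perturbation into its positive and negative parts. First I would fix a constant $c>0$ with $A+c\geq 0$ and $B+c\geq 0$ (possible since $A,B\in\mathcal{M}$), and observe that $f(A)-f(B)=g(A+c)-g(B+c)$ where $g(s)=f(s-c)$, so after this harmless translation we may assume $A,B\geq 0$; note $A-B$ is unchanged. Next, using the decomposition $A-B=(A-B)_+-(A-B)_-$ introduced already in the proof of Lemma~\ref{fin case}, set $C:=B+(A-B)_+=A+(A-B)_-\geq 0$, so that $C\geq A$, $C\geq B$, and both differences $C-A=(A-B)_-$ and $C-B=(A-B)_+$ are positive with supports of finite trace (indeed ${\rm supp}(C-A), {\rm supp}(C-B)\leq{\rm supp}(A-B)$, and $\|C-A\|_1+\|C-B\|_1=\|A-B\|_1$).

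Both pairs $(C,A)$ and $(C,B)$ now satisfy the hypotheses of Lemma~\ref{main lemma}, so there are compactly supported positive functions $\xi^{(2)}_{C,A}$ and $\xi^{(2)}_{C,B}$ with $\tau(p(C)-p(A))=\int p'(s)\,\xi^{(2)}_{C,A}(s)\,ds$ for every polynomial $p$, and likewise for $(C,B)$; subtracting gives $\tau(p(A)-p(B))=\int p'(s)\,\xi^{(3)}_{A,B}(s)\,ds$ with $\xi^{(3)}_{A,B}:=\xi^{(2)}_{C,B}-\xi^{(2)}_{C,A}$, a compactly supported function. To pass from polynomials to arbitrary $f\in C^2$, I would note that all the operators involved are bounded, say with spectra in a common compact interval $J$, that $\xi^{(3)}_{A,B}$ is supported in a fixed bounded interval, and then invoke a polynomial (Weierstrass) approximation: choose polynomials $p_k$ with $p_k\to f$ and $p_k'\to f'$ uniformly on $J$; the left-hand side converges because $\|p_k(A)-f(A)\|_1,\|p_k(B)-f(B)\|_1\to 0$ (uniform approximation on the spectrum, using that $A-B\in\mathcal{L}_1$ and Theorem~\ref{ps main}, or directly since these are finite-trace-support differences), and the right-hand side converges since $\xi^{(3)}_{A,B}\in L_1$. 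Uniqueness of the compactly supported $\xi^{(3)}_{A,B}$ follows because polynomials separate points of $L_1$ on any bounded interval, exactly as in the end of the proof of Lemma~\ref{main lemma}; and when the hypotheses of Lemma~\ref{main lemma} are also met, applying the identity to $f(s)=s^m$ and invoking that same separation property forces $\xi^{(3)}_{A,B}=\xi^{(2)}_{A,B}$.

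Finally, for the two estimates: from $\xi^{(2)}_{C,A},\xi^{(2)}_{C,B}\geq 0$ we get, using Lemma~\ref{main lemma} together with the $\|\cdot\|_1$-bounds of Lemma~\ref{fin case}~(\ref{finb}) transported through the weak$^*$-limit, that $\|\xi^{(2)}_{C,A}\|_1\leq\|C-A\|_1=\|(A-B)_-\|_1$ and $\|\xi^{(2)}_{C,B}\|_1\leq\|C-B\|_1=\|(A-B)_+\|_1$; hence $\|\xi^{(3)}_{A,B}\|_1\leq\|\xi^{(2)}_{C,A}\|_1+\|\xi^{(2)}_{C,B}\|_1\leq\|(A-B)_+\|_1+\|(A-B)_-\|_1=\|A-B\|_1$. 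For the total mass, take $f(s)=s$ in the trace identity: $\int_{\mathbb{R}}\xi^{(3)}_{A,B}(s)\,ds=\tau(A-B)$. The main obstacle I anticipate is the bookkeeping in the passage from polynomials to $C^2$ functions --- one must make sure the approximation is uniform on a compact set containing \emph{all} relevant spectra and that the interchange of limit and trace (resp.\ limit and integral) is legitimate; this is routine given that everything is bounded and $\xi^{(3)}_{A,B}$ is compactly supported and integrable, but it is the step where the hypotheses $A,B\in\mathcal{M}$ and $A-B\in\mathcal{L}_1$ are genuinely used.
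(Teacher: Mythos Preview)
Your reduction to the ordered case via the intermediate operator $C$ is correct and matches the paper's first move, but there is a genuine gap in the next step. You assert that the pairs $(C,A)$ and $(C,B)$ satisfy the hypotheses of Lemma~\ref{main lemma} because ${\rm supp}(C-A),{\rm supp}(C-B)\leq{\rm supp}(A-B)$ have finite trace. This is not given: the standing hypothesis is only $A-B\in\mathcal{L}_1$, and an element of $\mathcal{L}_1$ need not have a support of finite trace (for instance, in $\mathcal{L}(H)$ with the standard trace, a diagonal trace-class operator with strictly positive eigenvalues $1/n^2$ has support projection of infinite rank). The whole purpose of Lemma~\ref{prel}, as the paper states explicitly, is to \emph{remove} the assumption $\tau({\rm supp}(A-B))<\infty$ that Lemma~\ref{main lemma} requires; your argument quietly reinstates it.

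The missing idea is an approximation: after reducing to $A\geq B\geq 0$, the paper chooses $0\leq D_n\leq A-B$ with $D_n\uparrow A-B$ and $\tau({\rm supp}(D_n))<\infty$, applies Lemma~\ref{main lemma} to each pair $(B+D_n,B)$, and then shows that the resulting $\xi^{(2)}_{B+D_n,B}$ form an increasing sequence in $L_1$ (via the identity $\xi^{(2)}_{B+D_n,B}-\xi^{(2)}_{B+D_m,B}=\xi^{(2)}_{B+D_n,B+D_m}\geq 0$) with bounded $L_1$-norms, so that Monotone Convergence produces $\xi^{(3)}_{A,B}$. Theorem~\ref{ps main} is used to pass from $\tau(f(B+D_n)-f(B))$ to $\tau(f(A)-f(B))$. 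Your Weierstrass-approximation step from polynomials to $C^2$ is fine in spirit (the paper accomplishes the same thing by noting that on the bounded spectral interval any $C^2$ function agrees with some $g\in W_1$), and your derivation of the two estimates would go through once the construction is repaired.
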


\begin{proof} Set~$C:=A+(B-A)_+=B-(B-A)_-$. Observe that~$C\geq A$, $C\geq B$ and that~$C-A,C-B\in\mathcal{L}_1$.
Since
$$\tau(f(A)-f(B))=\tau(f(C)-f(B))-\tau(f(C)-f(A)),$$
it is sufficient to consider only the case~$A\geq B\geq 0$.

Clearly, the left hand side of~(\ref{KTFgeneral}) does not depend
on the values of~$f$ outside of the interval
$[-\|A\|_{\infty},\|A\|_{\infty}]$. For every~$f\in
C^2(\mathbb{R})$, there exists~$g\in W_1$ such that~$f=g$ on the
interval~$[-\|A\|_{\infty},\|A\|_{\infty}]$. Thus, we may assume
without loss of generality that~$f\in W_1$.

Let~$0\leq D_n\leq A-B$ be such that~$D_n\uparrow A-B$ and such
that~$\tau({\rm supp}(D_n))<\infty$. It follows from Theorem~\ref{ps main} that
\begin{equation}\label{tbd1}
|\tau(f(B+D_n))-f(A))|\leq{\rm const}\cdot\|f\|_{W_1}\cdot\|B+D_n-A\|_1\to 0.
\end{equation}
Since the functions which are polynomials on
$[-\|A\|_{\infty},\|A\|_{\infty}]$ are dense in~$W_1$, it follows
from Lemma~\ref{main lemma}   that
\begin{equation}\label{tbd2}
\tau(f(B+D_n)-f(B))=\int_0^{\infty}f'(s)\xi^{(2)}_{B+D_n,B}(s)ds
\end{equation}
for every~$f\in W_1$. Hence, we have
$$\int_0^{\infty}f'(s)\xi_{B+D_n,B+D_m}(s)ds\stackrel{Lemma\ \ref{main lemma}}{=}\tau(f(B+D_n)-f(B+D_m))=$$
$$=\tau(f(B+D_n)-f(B))-\tau(f(B+D_m)-f(B))=$$
$$\stackrel{(\ref{tbd2})}{=}\int_0^{\infty}f'(s)(\xi_{B+D_n,B}(s)-\xi_{B+D_m,B}(s))ds.$$
Since~$f'$ is an arbitrary~$C^1$ function on~$[-\|A\|_{\infty},\|A\|_{\infty}]$, it follows that
$$\xi^{(2)}_{B+D_n,B}-\xi^{(2)}_{B+D_m,B}=\xi^{(2)}_{B+D_n,B+D_m}\geq0,\quad n\geq m.$$
Setting~$f(s)=s$ in Lemma~\ref{main lemma}, we obtain that
$\|\xi^{(2)}_{B+D_n,B}\|_1=\|D_n\|_1$. Since
$\|D_n\|_1\leq\|A-B\|_1$, it follows from the Monotone Convergence
Principle, that the sequence~$\xi^{(2)}_{B+D_n,B}$ converges in
$L_1(\mathbb{R})$. Denote its limit by~$\xi^{(3)}_{A,B}$.
Therefore,
$$\tau(f(A)-f(B))\stackrel{(\ref{tbd1})}{=}\lim_{n\to\infty}\tau(f(B+D_n)-f(B))=$$
$$\stackrel{(\ref{tbd2})}{=}\lim_{n\to\infty}\int_0^{\infty}f'(s)\xi^{(2)}_{B+D_n,B}(s)ds
=\int_0^{\infty}f'(s)\xi^{(3)}_{A,B}(s)ds.$$

The identification~$\xi^{(2)}_{A,B}=\xi^{(3)}_{A,B}$ can be established similarly to that in Lemma~\ref{main lemma}.
\end{proof}

The next step in our approach is removing the assumption that the
operators~$A$ and~$B$ are bounded.  We remove this assumption in
two steps: (i)  first we show our construction of a locally
integrable~$\xi$ for unbounded pair of~$A$ and~$B$ (see
Lemma~\ref{FinalStepI}); and (ii) we show that this new~$\xi$ is
 integrable (and positive when $A\ge B$) (see Lemma~\ref{main theorem}).

\begin{lemma}\label{FinalStepI} Suppose that the assumptions of Theorem~\ref{thm:KTFgeneral} hold.
The equality~(\ref{KTFgeneral}) holds for $f\in C_b^1(\mathbb{R})$
such that~$f'\in S(\mathbb{R})$ with the unique locally integrable
function
$$\xi^{(4)}_{A,B}=\xi^{(3)}_{h(A),h(B)}\circ h.$$
Here, $h:\mathbb{R}\to(0,1)$ is a~$C^2$-bijection such that
\begin{equation}\label{hass}
(h^{-1})'(t),(h^{-1})''(t)=O((h^{-1}(t))^n),\quad t\to 0,1,
\end{equation}
for some~$n<\infty$. If~$A,B$ are bounded, then~$\xi^{(4)}_{A,B}=\xi^{(3)}_{A,B}$.
\end{lemma}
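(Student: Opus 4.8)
The plan is to transport the already-constructed bounded-case spectral shift function $\xi^{(3)}$ through a change of variables that maps the unbounded pair $(A,B)$ to a bounded pair. Pick a $C^2$-bijection $h:\mathbb{R}\to(0,1)$ satisfying the growth condition~\eqref{hass}; concretely, something like $h(s)=\frac12+\frac1\pi\arctan(s)$ works after checking~\eqref{hass}. Since $A,B$ are self-adjoint and affiliated with $\mathcal{M}$, the operators $h(A),h(B)$ are bounded (with spectrum in $(0,1)$), self-adjoint, and affiliated with $\mathcal{M}$, hence lie in $\mathcal{M}$. The first thing I would verify is that $h(A)-h(B)\in\mathcal{L}_1$, i.e.\ that the hypothesis of Lemma~\ref{prel} is met for the pair $(h(A),h(B))$. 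This follows from Theorem~\ref{ps main}: writing $h$ (restricted/modified to agree with a $W_1$ function, since $h$ itself is bounded with bounded derivative and $h'\in L_1$, so $h\in W_1$ up to the usual adjustment), we get $h(A)-h(B)\in\mathcal{L}_1$ with $\|h(A)-h(B)\|_1\le \|h\|_{W_1}\|A-B\|_1$. So Lemma~\ref{prel} applies and yields the compactly supported $\xi^{(3)}_{h(A),h(B)}\in L_1(\mathbb{R})$, supported in fact in $[0,1]$.

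Next I would run the substitution in the trace formula. Given $f\in C_b^1(\mathbb{R})$ with $f'\in S(\mathbb{R})$, set $g:=f\circ h^{-1}$ on $(0,1)$. The point of the growth condition~\eqref{hass} is precisely to guarantee that $g$ extends to a $C^2$ function on a neighborhood of $[0,1]$ (so that Lemma~\ref{prel} is applicable to $g$ with the pair $h(A),h(B)$): near the endpoints $t\to0,1$ one has $g'(t)=f'(h^{-1}(t))\cdot(h^{-1})'(t)$ and $g''(t)$ involves $(h^{-1})''(t)$ and $((h^{-1})'(t))^2$, and since $f'\in S(\mathbb{R})$ decays faster than any polynomial while $h^{-1}(t)\to\pm\infty$, the bounds~\eqref{hass} force $g',g''\to0$ at the endpoints. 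Thus $g\in C^2(\mathbb{R})$ after extending by constants, and $f(A)=g(h(A))$, $f(B)=g(h(B))$ by the functional calculus. Applying Lemma~\ref{prel} to $g$ and the bounded pair,
\[
\tau(f(A)-f(B))=\tau(g(h(A))-g(h(B)))=\int_{\mathbb{R}}g'(t)\,\xi^{(3)}_{h(A),h(B)}(t)\,dt.
\]
Now change variables $t=h(s)$, $dt=h'(s)\,ds$, and use $g'(h(s))=f'(s)/h'(s)$:
\[
\int_{\mathbb{R}}g'(t)\,\xi^{(3)}_{h(A),h(B)}(t)\,dt=\int_{\mathbb{R}}\frac{f'(s)}{h'(s)}\,\xi^{(3)}_{h(A),h(B)}(h(s))\,h'(s)\,ds=\int_{\mathbb{R}}f'(s)\,\xi^{(4)}_{A,B}(s)\,ds,
\]
where $\xi^{(4)}_{A,B}=\xi^{(3)}_{h(A),h(B)}\circ h$. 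This is~\eqref{KTFgeneral} for the required class of $f$. Local integrability of $\xi^{(4)}$ on $\mathbb{R}$ is immediate: on any compact $K\subset\mathbb{R}$, $h(K)$ is a compact subinterval of $(0,1)$ and $h'$ is bounded below there, so $\int_K|\xi^{(4)}|\le C\int_{h(K)}|\xi^{(3)}_{h(A),h(B)}|<\infty$; but global integrability is not claimed here (that is deferred to Lemma~\ref{main theorem}), which is why $h'(s)\to0$ at infinity is harmless at this stage.

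Uniqueness of the locally integrable $\xi^{(4)}$ satisfying~\eqref{KTFgeneral} for all such $f$ follows because the derivatives $f'$ of the admissible $f$ (i.e.\ $S(\mathbb{R})$) separate points of $L_1^{\mathrm{loc}}$: if $\int f'\eta=0$ for all $f'\in S(\mathbb{R})$ and $\eta\in L_1^{\mathrm{loc}}$, then $\eta=0$ a.e. Finally, for the consistency statement when $A,B$ are bounded: then $h(A),h(B)$ are again bounded and Lemma~\ref{prel} gives both $\xi^{(3)}_{A,B}$ and $\xi^{(3)}_{h(A),h(B)}$; applying the change-of-variables identity above shows that $\xi^{(3)}_{h(A),h(B)}\circ h$ satisfies the defining trace formula for $\xi^{(3)}_{A,B}$ (on $C^2$ functions, using that $g\circ h^{-1}$-type manipulations stay within $C^2$ for bounded operators), and uniqueness in Lemma~\ref{prel} forces $\xi^{(4)}_{A,B}=\xi^{(3)}_{A,B}$.

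I expect the main obstacle to be the bookkeeping around the growth condition~\eqref{hass}: one must check carefully that $g=f\circ h^{-1}$ genuinely extends to a $C^2$ (or at least $W_1$, for the application of Lemma~\ref{prel}) function across the endpoints $0$ and $1$, combining the super-polynomial decay of $f'\in S(\mathbb{R})$ with the polynomial-in-$h^{-1}$ bounds on $(h^{-1})'$ and $(h^{-1})''$ — and to verify that a concrete $h$ (e.g.\ built from $\arctan$) actually satisfies~\eqref{hass}. The functional-calculus identity $f(A)=g(h(A))$ for unbounded self-adjoint $A$ is routine, and the change of variables in the integral is elementary; the delicate point is purely the regularity transfer at the two boundary points.
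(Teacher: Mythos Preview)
Your proposal is correct and follows essentially the same route as the paper's own proof: verify $h(A)-h(B)\in\mathcal{L}_1$ via Theorem~\ref{ps main}, show that $g=f\circ h^{-1}$ extends to a $C^2$ function on $[0,1]$ by combining the Schwartz decay of $f'$ with the growth condition~\eqref{hass}, apply Lemma~\ref{prel} to the bounded pair $(h(A),h(B))$, and change variables $t=h(s)$. The only cosmetic differences are that the paper begins with an (unnecessary for this lemma) reduction to $A\geq B$ and does not single out a concrete choice of $h$, while you exhibit the $\arctan$-based bijection explicitly; the consistency with $\xi^{(3)}_{A,B}$ in the bounded case is handled identically, by comparing the two trace formulas and invoking uniqueness.
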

\begin{proof} As in Lemma~\ref{prel}, we may assume that~$A\geq B$.
Applying Theorem~\ref{ps main} to the operators~$A$, $B$ and to the function~$h$, we obtain
\begin{equation*}
\|h(A)-h(B)\|_1 \leq{\rm const} \cdot \|A-B\|_1.
\end{equation*}
Define the function~$g$ by setting~$g:=f\circ h^{-1}$. It is a
composition of two~$C^2$ functions and, therefore, $g\in
C^2(-1,1)$. The assumption~$f'\in S(\mathbb{R})$ guarantees that
$f',f''$ decrease faster than any power function. This together
with~(\ref{hass}) implies that
$$g'(+0)=g'(1-0)=0,\quad g''(+0)=g''(1-0)=0.$$
Hence, we have~$g\in C^2[0,1]$. Clearly, $g$ extends to a function
$g\in C^2_b(\mathbb{R})$. Applying Lemma~\ref{prel} to the
operators~$h(A)$ and~$h(B)$, we now obtain
\begin{multline}\label{FinalStepIeq}
\tau(f(A)-f(B))=\tau(g(h(A))-g(h(B)))=\\
=\int_{\mathbb{R}}g'(s)\xi^{(3)}_{h(A),h(B)}(s)ds=\int_{\mathbb{R}} f'(u) \xi^{(3)}_{h(A),h(B)}(h(u))du,
\end{multline}
where in the last step we substituted~$s=h(u)$. This
proves~(\ref{KTFgeneral}). Since the left hand side does not
depend on~$h$ and since~$f'$ is an arbitrary Schwartz function,
the uniqueness follows. The local integrability of
~$\xi^{(4)}_{A,B}$ follows from the integrability of
~$\xi^{(3)}_{h(A),h(B)}$ combined with the assumption that ~$h$ is
a~$C^2$-bijection.


If~$A$ and~$B$ are bounded, then by using the trace formula for the left hand side of~(\ref{FinalStepIeq}), we obtain
$$\int_{\mathbb{R}}f'(u)\xi^{(3)}_{A,B}(u)du=\tau\left(f(A)-f(B)\right)=\int_{\mathbb{R}}f'(u)\xi^{(4)}_{A,B}(u)du.$$
The latter implies that~$\xi^{(3)}_{A,B}=\xi^{(4)}_{A,B}$ in this case.
\end{proof}

In what follows, $C_b(\mathbb{R})$ is the space of all bounded
functions in~$C(\mathbb{R})$. The spaces~$C_b^1(\mathbb{R})$ and
$C_b^2(\mathbb{R})$ are defined in an obvious manner.

\begin{lemma}\label{main theorem} Suppose that the assumptions of Theorem~\ref{thm:KTFgeneral} hold.
The equality~(\ref{KTFgeneral}) holds with~$\xi^{(4)}_{A,B}$ of Lemma~\ref{FinalStepI},
 for every~$f\in C^1_b(\mathbb{R})$ such that~$f'\in S(\mathbb{R})$. Moreover, $\xi^{(4)}_{A,B}\in L_1(\mathbb{R})$.
 If~$A\geq B$, then we also have~$\xi^{(4)}_{A,B}\geq0$.
\end{lemma}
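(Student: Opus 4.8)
The validity of~(\ref{KTFgeneral}) with $\xi=\xi^{(4)}_{A,B}$ for every $f\in C^1_b(\mathbb{R})$ with $f'\in S(\mathbb{R})$ is precisely the content of Lemma~\ref{FinalStepI}, so the plan is to establish only the two remaining claims: that $\xi^{(4)}_{A,B}\in L_1(\mathbb{R})$, and that $\xi^{(4)}_{A,B}\geq 0$ when $A\geq B$. The obstacle here is that, in contrast with the bounded situation of Lemma~\ref{prel}, positivity cannot be read off from the representation $\xi^{(4)}_{A,B}=\xi^{(3)}_{h(A),h(B)}\circ h$: that would require $h(A)\geq h(B)$, i.e.\ operator monotonicity of $h$, whereas no $C^2$-bijection $h\colon\mathbb{R}\to(0,1)$ is operator monotone, an operator monotone function on all of $\mathbb{R}$ being necessarily affine. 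The device that circumvents this---and the step I regard as the crux---is to never compare $h(A)$ with $h(B)$ and to extract both positivity and the sharp bound on the mass of $\xi^{(4)}_{A,B}$ directly from the second (integral) formula of Theorem~\ref{ps main}.

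First I would prove positivity in the case $A\geq B$. Put $V:=A-B\geq 0$ and, for $z\in[0,1]$, $C_z:=(1-z)A+zB=A-zV$; each $C_z$ is a bounded self-adjoint perturbation of $A$, hence self-adjoint and affiliated with $\mathcal{M}$. If $f$ is bounded, of class $C^1$, increasing, and has $f'\in S(\mathbb{R})$ (in particular $f\in C^1_b(\mathbb{R})$ and $\mathcal{F}(f')\in L_1(\mathbb{R})$, so $f\in W_1$), then by~(\ref{KTFgeneral}) and the integral trace formula of Theorem~\ref{ps main},
$$\int_{\mathbb{R}}f'(s)\,\xi^{(4)}_{A,B}(s)\,ds=\tau(f(A)-f(B))=\int_0^1\tau\bigl(f'(C_z)V\bigr)\,dz\geq 0,$$
since $f'\geq 0$ gives $f'(C_z)\geq 0$, whence $\tau(f'(C_z)V)=\tau\bigl(V^{1/2}f'(C_z)V^{1/2}\bigr)\geq 0$ (cyclicity of the trace applies because $V^{1/2}\in\mathcal{L}_2$). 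As every nonnegative Schwartz function arises as such an $f'$, and $\xi^{(4)}_{A,B}$ is locally integrable, this forces $\xi^{(4)}_{A,B}\geq 0$ almost everywhere.

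Next, still assuming $A\geq B$, I would prove integrability. Let $f_\varepsilon(x):=\int_0^x e^{-\varepsilon^2 t^2}\,dt$, so that $f_\varepsilon\in C^1_b(\mathbb{R})$, $f_\varepsilon'\in S(\mathbb{R})$, and $f_\varepsilon'\uparrow 1$ pointwise as $\varepsilon\downarrow 0$. Since $\xi^{(4)}_{A,B}\geq 0$, monotone convergence gives $\int_{\mathbb{R}}f_\varepsilon'\,\xi^{(4)}_{A,B}\uparrow\int_{\mathbb{R}}\xi^{(4)}_{A,B}$, while~(\ref{KTFgeneral}) and Theorem~\ref{ps main} give $\int_{\mathbb{R}}f_\varepsilon'\,\xi^{(4)}_{A,B}=\tau(f_\varepsilon(A)-f_\varepsilon(B))=\int_0^1\tau\bigl(e^{-\varepsilon^2 C_z^2}V\bigr)\,dz$. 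For fixed $z$ the positive contractions $e^{-\varepsilon^2 C_z^2}$ increase to $1$ as $\varepsilon\downarrow 0$, so $V^{1/2}e^{-\varepsilon^2 C_z^2}V^{1/2}\uparrow V$ in $S(\tau)^{+}$, and normality of $\tau$ yields $\tau(e^{-\varepsilon^2 C_z^2}V)=\tau\bigl(V^{1/2}e^{-\varepsilon^2 C_z^2}V^{1/2}\bigr)\uparrow\tau(V)$; a further application of monotone convergence in $z$ (the integrand being bounded by $\tau(V)$) gives $\int_0^1\tau(e^{-\varepsilon^2 C_z^2}V)\,dz\to\tau(V)$. Hence $\int_{\mathbb{R}}\xi^{(4)}_{A,B}=\tau(A-B)=\|A-B\|_1<\infty$, so $\xi^{(4)}_{A,B}\in L_1(\mathbb{R})$ with $\|\xi^{(4)}_{A,B}\|_1=\|A-B\|_1$.

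Finally, for arbitrary $A,B$ I would set $C:=A+(B-A)_+$; then $C\geq A$, $C\geq B$, the operators $C-A=(B-A)_+$ and $C-B=(B-A)_-$ lie in $\mathcal{L}_1$, and $C$ is self-adjoint and affiliated with $\mathcal{M}$. From $\tau(f(A)-f(B))=\tau(f(C)-f(B))-\tau(f(C)-f(A))$, equation~(\ref{KTFgeneral}) applied to the pairs $(C,B)$ and $(C,A)$, and the uniqueness statement of Lemma~\ref{FinalStepI}, one obtains $\xi^{(4)}_{A,B}=\xi^{(4)}_{C,B}-\xi^{(4)}_{C,A}$ almost everywhere. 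By the case already treated, $\xi^{(4)}_{C,B}$ and $\xi^{(4)}_{C,A}$ belong to $L_1(\mathbb{R})$, hence so does $\xi^{(4)}_{A,B}$, and $\|\xi^{(4)}_{A,B}\|_1\leq\|\xi^{(4)}_{C,B}\|_1+\|\xi^{(4)}_{C,A}\|_1=\tau((B-A)_-)+\tau((B-A)_+)=\|A-B\|_1$, which completes the argument. The one genuinely delicate point is the positivity step: the reduction map $h$ necessarily destroys the operator order, and the resolution is to bypass it entirely, letting the nonnegativity of $f'$ and of $V=A-B$ interact inside the integral trace formula.
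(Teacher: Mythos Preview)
Your proof is correct, and it takes a genuinely different route from the paper's. Both arguments reduce to the case $A\geq B$, prove positivity first via the integral formula of Theorem~\ref{ps main} (nonnegativity of $\tau(g'(C_z)V)$ for increasing $g$), and then deduce integrability by monotone convergence. The difference lies in how the test functions are deployed. The paper never tests $\xi^{(4)}_{A,B}$ against Schwartz functions directly; instead it exploits the representation $\xi^{(4)}_{A,B}=\xi^{(3)}_{h(A),h(B)}\circ h$ with two bespoke families of $C^2$-bijections. For positivity it builds $h_{a,b,\alpha}$ with $h'_{a,b,\alpha}=1$ on $[a,b]$ and decaying tails, applies Lemma~\ref{prel} with $f(s)=s$ to the bounded pair $h_{a,b,\alpha}(A),h_{a,b,\alpha}(B)$, and lets $\alpha\to 0$ so that $h'_{a,b,\alpha}\to\chi_{(a,b)}$, obtaining $\int_a^b\xi^{(4)}_{A,B}\geq 0$. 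For integrability it uses $h_\alpha(s)=s(\alpha^2+s^2)^{-1/2}$ together with the Widom estimate of Theorem~\ref{ps main}, getting only $\alpha\int h'_\alpha\,\xi^{(4)}_{A,B}=O(1)$ and hence $\|\xi^{(4)}_{A,B}\|_1<\infty$ with an unspecified constant. Your approach bypasses the $h$-representation entirely: you use only the \emph{conclusion} of Lemma~\ref{FinalStepI}, testing $\xi^{(4)}_{A,B}$ against arbitrary nonnegative $f'\in S(\mathbb{R})$ for positivity, and against the Gaussians $f'_\varepsilon(x)=e^{-\varepsilon^2 x^2}$ for integrability. This is cleaner---no verification of condition~(\ref{hass}), no limiting argument in $\alpha$ for positivity---and it delivers the sharp identity $\int_{\mathbb{R}}\xi^{(4)}_{A,B}=\tau(A-B)$ (hence $\|\xi^{(4)}_{A,B}\|_1=\|A-B\|_1$ when $A\geq B$) in one stroke, which the paper's proof of this lemma does not.
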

\begin{proof} Without loss of generality, $A\geq B$. We show that the function of Lemma~\ref{FinalStepI}
is positive and integrable.

We show the positivity first. Let~$\alpha>0$ and let
$a<b\in\mathbb{R}$. Define the function~$h_{a,b,\alpha}$ by
setting~$h_{a,b,\alpha}(-\infty)=0$ and
$$h_{a,b,\alpha}'(t)=\begin{cases}
1,\quad t\in[a,b]\\
\frac{\alpha^3}{(\alpha^2+(t-b)^2)^{3/2}},\quad t>b\\
\frac{\alpha^3}{(\alpha^2+(t-a)^2)^{3/2}},\quad t<a
\end{cases}
$$
The function~$h_{a,b,\alpha}$ is a~$C^2$-bijection
$\mathbb{R}\mapsto(0,c)$ where~$c=h_{a,b,\alpha}(+\infty)$. A
simple computation shows that~$h_{a,b,\alpha}$ satisfies the
condition~(\ref{hass}) in Lemma~\ref{FinalStepI}. If
$\xi^{(4)}_{A,B}$ is from Lemma~\ref{FinalStepI}, then
$$\xi^{(4)}_{A,B}=\xi^{(3)}_{h_{a,b,\alpha}(A),h_{a,b,\alpha}(B)}\circ h_{a,b,\alpha}.$$

Combining Theorem~\ref{ps main} with the well known fact that
$\tau(CD)\ge 0$ when $0\leq C\in \mathcal {M}$ and $0\leq D\in
\mathcal{L}_1$, we obtain the following estimate
$$\tau(h_{a,b,\alpha}(A)-h_{a,b,\alpha}(B))=\int_0^1\tau(h_{a,b,\alpha}'((1-z)A+zB)(A-B))dz\geq0.$$
Furthermore, by Lemma~\ref{prel} (applied to~$h_{a,b,\alpha}(A)$
and~$h_{a,b,\alpha}(B)$ and~$f(s)=s$) and substituting
$s=h_{a,b,\alpha}(u)$, we obtain
\begin{multline*}
\tau(h_{a,b,\alpha}(A)-h_{a,b,\alpha}(B))=\int_0^c\xi^{(3)}_{h_{a,b,\alpha}(A),h_{a,b,\alpha}(B)}(s)ds=\\
=\int_{\mathbb{R}}\xi^{(3)}_{h_{a,b,\alpha}(A),h_{a,b,\alpha}(B)}(h_{a,b,\alpha}(u))h_{a,b,\alpha}'(u)du
=\int_{\mathbb{R}}\xi^{(4)}_{A,B}(u) h_{a,b,\alpha}'(u)du.
\end{multline*}
In particular, the function~$\xi^{(4)}_{A,B}h'_{a,b,\alpha}$ is
integrable.  Since~$h_{a,b,\alpha}'\to\chi_{(a,b)}$ almost
everywhere as~$\alpha\to0$ and since~$h_{a,b,\alpha}'\leq
h_{a,b,1}'$ when~$\alpha<1$, it follows from the Dominated
Convergence Principle that
$$\int_a^b\xi^{(4)}_{A,B}(u)du = \lim_{\alpha\to0}\int_{\mathbb{R}}\xi^{(4)}_{A,B}(u)h_{a,b,\alpha}'(u) du\geq 0.$$
Since the latter inequality holds for arbitrary scalars~$a,b$, it follows that~$\xi^{(4)}_{A,B}\geq0$.

Now we show the integrability of~$\xi^{(4)}_{A,B}$ on
$\mathbb{R}$. Consider the~$C^2$-bijection
$h_{\alpha}:\mathbb{R}\to(-1,1)$ given by
$$h_\alpha(s)=\frac{s}{(\alpha^2+s^2)^{\frac12}},\quad s\in\mathbb{R}.$$
If~$\xi^{(4)}_{A,B}$ is from Lemma~\ref{FinalStepI}, then
$$\xi^{(4)}_{A,B}=\xi^{(3)}_{h_{\alpha}(A),h_{\alpha}(B)}\circ h_{\alpha}.$$
By Lemma~\ref{prel} (applied to the operators~$h_{\alpha}(A)$ and~$h_{\alpha}(B)$) we have
\begin{multline*}
\tau(h_{\alpha}(A)-h_{\alpha}(B))=\int_{-1}^1\xi^{(3)}_{h_{\alpha}(A),h_{\alpha}(B)}(s)ds=\\
=\int_{\mathbb{R}}\xi^{(3)}_{h_{\alpha}(A),h_{\alpha}(B)}(h_{\alpha}(u))h_{\alpha}'(u)du
=\int_{\mathbb{R}}\xi^{(4)}_{A,B}(u)h_{\alpha}'(u)du.
\end{multline*}
We have
\begin{multline*}
\left|\int_{-\infty}^{\infty}\xi^{(4)}_{A,B}(u)\cdot h_{\alpha}'(u)du \right|=
\left|\tau(h_{\alpha}(A)-h_{\alpha}(B))\right|\leq\\
\leq\left\| h_\alpha(A)-h_\alpha(B)\right\|_1\leq\frac{\mathrm{const}}{\alpha}\left\|A-B\right\|_1,
\end{multline*}
where the last estimate follows from Theorem~\ref{ps main}, applied to the operators~$\alpha^{-1}A$ and
$\alpha^{-1}B$ and function~$h_1$ (note that~$h_{\alpha}(s)=h_1(\frac{s}{\alpha})$ for all~$s$). Therefore, we have
$$\alpha\int_{\mathbb{R}}h_{\alpha}'(u)\xi_{A,B}^{(4)}(u)du=O(1).$$
Since~$\alpha h_{\alpha}'\uparrow 1$ when~$\alpha\to\infty$, we infer from the
Monotone Convergence Principle (which is applicable since~$\xi^{(4)}_{A,B}\geq0$) that~$\xi^{(4)}_{A,B}$ is integrable.
\end{proof}

Finally we give a simple extension of Lemma~\ref{main theorem} to
the class~$W_1$. The extension is based on our earlier observation
in Lemma~\ref{ApproxLemma}.

\begin{lemma}\label{toW1ext} Suppose that the assumptions of Theorem~\ref{thm:KTFgeneral} hold.
The trace formula~(\ref{KTFgeneral}) holds for every~$f\in W_1$.
\end{lemma}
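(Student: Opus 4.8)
The plan is to reduce the general case $f\in W_1$ to the already-established case $f\in C^1_b(\mathbb{R})$ with $f'\in S(\mathbb{R})$ (Lemma~\ref{main theorem}) by a density argument, using the approximation provided by Lemma~\ref{ApproxLemma} and the norm estimate from Theorem~\ref{ps main}. Concretely, let $f\in W_1$ be given, and choose by Lemma~\ref{ApproxLemma} a sequence $f_n\in C^1_b(\mathbb{R})$ with $f_n'\in S(\mathbb{R})$ such that $\|f-f_n\|_{W_1}\to 0$. Set $\xi=\xi^{(4)}_{A,B}$, the function already produced in Lemma~\ref{main theorem}; note crucially that this $\xi$ depends only on $A$ and $B$, not on the particular test function, and that by Lemma~\ref{main theorem} it lies in $L_1(\mathbb{R})$.

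The two quantities to be compared are $\tau(f(A)-f(B))$ and $\int_{\mathbb{R}}f'(s)\xi(s)\,ds$, and both are continuous in $f$ with respect to $\|\cdot\|_{W_1}$. For the right-hand side this is immediate: $\bigl|\int_{\mathbb{R}}(f'-f_n')(s)\xi(s)\,ds\bigr|\leq \|f'-f_n'\|_\infty\|\xi\|_1\leq \|\mathcal{F}(f'-f_n')\|_1\|\xi\|_1=\|f-f_n\|_{W_1}\|\xi\|_1\to0$, since $\|\mathcal{F}g\|_\infty\leq\|g\|_1$ gives $\|g\|_\infty=\|\mathcal{F}(\mathcal{F}^{-1}g)\|_\infty\le\|\mathcal{F}^{-1}g\|_1$ — in other words $f'-f_n'$ is the inverse Fourier transform of an $L_1$ function of norm $\|f-f_n\|_{W_1}$, hence its sup norm is bounded by that. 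For the left-hand side, Theorem~\ref{ps main} applies to $f-f_n\in W_1$ (after the usual reduction replacing $A,B$ by $h_\alpha(A),h_\alpha(B)$ is not even needed here, since $W_1$ functions act directly on unbounded affiliated operators via the first part of that theorem): it yields $\|(f-f_n)(A)-(f-f_n)(B)\|_1\leq \|f-f_n\|_{W_1}\|A-B\|_1$, and therefore $|\tau(f(A)-f(B))-\tau(f_n(A)-f_n(B))|\leq \|f-f_n\|_{W_1}\|A-B\|_1\to 0$.

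Combining the two convergences with the identity $\tau(f_n(A)-f_n(B))=\int_{\mathbb{R}}f_n'(s)\xi(s)\,ds$ for each $n$ (valid by Lemma~\ref{main theorem}) and passing to the limit gives $\tau(f(A)-f(B))=\int_{\mathbb{R}}f'(s)\xi(s)\,ds$, which is exactly~(\ref{KTFgeneral}) for $f\in W_1$. This finishes the proof of Lemma~\ref{toW1ext}, and, together with the chain of identifications $\xi^{(1)}=\xi^{(2)}=\xi^{(3)}=\xi^{(4)}$ and the bounds $\|\xi\|_1\le\|A-B\|_1=\|V\|_1$, $\int\xi=\tau(A-B)=\tau(V)$ recorded along the way in Lemmas~\ref{prel} and~\ref{main theorem}, it completes the proof of Theorem~\ref{thm:KTFgeneral}.

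I do not expect a serious obstacle here: the content is genuinely a two-line approximation argument once Lemma~\ref{main theorem} supplies an honest $L_1$ function $\xi$ and Theorem~\ref{ps main} supplies the operator-norm continuity. The only point requiring a little care is making sure that the estimate $\|g\|_\infty\le\|\mathcal{F}g\|_1$ (applied to $g=f'-f_n'$) is the correct normalization-independent bound being invoked, and that Theorem~\ref{ps main} is being applied to the unbounded pair $A,B$ directly rather than to some bounded transform — both are covered by the statements as given.
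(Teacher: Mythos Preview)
Your proof is correct and follows the same approach as the paper: approximate $f$ by $f_n$ via Lemma~\ref{ApproxLemma}, apply Lemma~\ref{main theorem} to each $f_n$, and pass to the limit using Theorem~\ref{ps main} on the trace side and $\xi\in L_1$ on the integral side. The only cosmetic difference is that the paper justifies $\int f_n'\xi\to\int f'\xi$ via a Parseval-type identity $\int g\,\xi=\int(\mathcal{F}g)(\mathcal{F}\xi)$ together with $\mathcal{F}\xi\in L_\infty$, whereas you use the equivalent and slightly more direct bound $\|f'-f_n'\|_\infty\le\|\mathcal{F}(f'-f_n')\|_1=\|f-f_n\|_{W_1}$.
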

\begin{proof} Fix~$f\in W_1$. By Lemma~\ref{ApproxLemma},
there is a sequence~$f_n\in C_b^1(\mathbb R)$ such that~$f_n'\in S(\mathbb R)$ and such that
$$\lim_{n\rightarrow\infty}\left\|f-f_n\right\|_{W_1}=0.$$

By Lemma~\ref{main theorem}, we have
\begin{equation}\label{ext0}
\tau(f_n(A)-f_n(B))=\int_{\mathbb{R}}f'(s)\xi(s)ds.
\end{equation}

By Theorem~\ref{ps main}, we have
$$|\tau(f_n(A)-f_n(B))-\tau(f(A)-f(B))|\leq\|(f_n-f)(A)-(f_n-f)(B)\|_1\leq$$
$$\leq{\rm const}\cdot\|f_n-f\|_{W_1}\|A-B\|_1.$$
Therefore,
\begin{equation}\label{ext1}
\lim_{n\to\infty}\tau(f_n(A)-f_n(B))=\tau(f(A)-f(B)).
\end{equation}

On the other hand, we have~$\xi\in L_1$ and, therefore,
$\mathcal{F}(\xi)\in L_{\infty}$. We have
$\mathcal{F}(f_n')\to\mathcal{F}(f)$ in~$L_1$ and, therefore,
$$\mathcal{F}(f_n')\mathcal{F}(\xi)\to\mathcal{F}(f)\mathcal{F}(\xi)$$
in~$L_1$. It follows that
\begin{multline}\label{ext2}
\int_{\mathbb{R}}f_n'(s)\xi(s)ds=\int_{\mathbb{R}}(\mathcal{F}(f_n'))(s)(\mathcal{F}(\xi))(s)ds\to\\
\to\int_{\mathbb{R}}(\mathcal{F}(f'))(s)(\mathcal{F}(\xi))(s)ds=\int_{\mathbb{R}}f'(s)\xi(s)ds.
\end{multline}

Combining~(\ref{ext0}), (\ref{ext1}) and~(\ref{ext2}), we conclude the proof.
\end{proof}

\begin{remark}
  We observe that Theorem~\ref{thm:KTFgeneral} and the trace formula
  (\ref{KTFgeneral}) can be further extended from the class of
  functions~$W_1$ to the homogeneous Besov class~$\tilde
  B^1_{\infty1}$ (see e.g.~\cite{Peller1990-MR1044807}).  In the case
  when~$\mathcal{M}={\mathcal L}(H)$, such an extension is performed in
  \cite{Peller1990-MR1044807}, in the general case the argument is
  exactly the same as in~\cite{Peller1990-MR1044807}. We omit further
  details.
\end{remark}

\end{document}